\newtheorem{theorem}{Theorem}%[section]
\newtheorem{definition}{Definition}[section]
\newtheorem{lemma}[theorem]{Lemma}
\newtheorem{proposition}[theorem]{Proposition}
\newtheorem{corollary}[theorem]{Corollary}
\title{The local H\"older exponent for the entropy of real unimodal maps}
\author{Giulio Tiozzo}
\begin{document}

%\begin{abstract}
%We prove that the topological entropy of real unimodal maps depends 
%as a H\"older continuous function of the kneading parameter, 
%and the local H\"older exponent equals, up to a factor $\log 2$, the value of the function at that point. 
%\end{abstract}

\maketitle

\begin{flushright}
\textit{For Tan Lei}
\end{flushright}
\vskip 1 cm

In order to encode and classify the topological dynamics of interval maps, Milnor and Thurston \cite{MT} defined 
the \emph{kneading sequence} of a unimodal map $f$ by recording the relative position of the iterates of the critical point. 
%on which side of the critical point the iterates of the critical point lie. 
This information can be packaged in a binary number, known as the \emph{kneading parameter}, or \emph{kneading angle} $\theta$ 
(see Section \ref{S:back} for a precise definition).  
One can check that the topological entropy of $f$ only depends on the kneading angle $\theta$, hence we can define 
$$h(\theta) := h(f)$$
the topological entropy of any unimodal map $f$ which has angle $\theta$. 

The function $h(\theta)$ has also the following interpretation in complex dynamics. Let $\theta \in \mathbb{R}/\mathbb{Z}$, and 
suppose that the external ray of angle $\theta$ for the Mandelbrot set 
%and suppose that the corresponding external ray 
lands on some real parameter $c_\theta$. Then $h(\theta)$ equals the topological entropy 
of the quadratic polynomial $f(z) = z^2 + c_\theta$. 

\medskip 
\noindent %Many things are known about the behaviour of the entropy function
The entropy of unimodal maps has been explored for several decades. In particular, the function $h(\theta)$:
\begin{enumerate}
\item is a continuous, weakly increasing function of $\theta$ \cite{MT}, and its maximum value is $\log 2$;
\item is constant on small copies of the Mandelbrot set whose root has positive entropy \cite{Do}; 
\item is positive for all $\theta >  \theta_\star$, where $\theta_\star$ is the kneading angle of the \emph{Feigenbaum map}, 
whose binary expansion is the \emph{Thue-Morse} sequence.
\end{enumerate} 

%Moreover, it is known that $h(\theta)  > 0$ if and only if $\theta > \theta_0$, where 
%Let $\theta_0$ be the largest angle for which $h(\theta_0) = 0$. It is known that 
%$\theta_0$ is the kneading angle of the \emph{Feigenbaum map} and its binary expansion is the Thue-Morse sequence.
\medskip

In this note, we are interested in the regularity of $h$. Our main result states that the local H\"older exponent of the entropy function $h(\theta)$ equals, up to a constant factor
of $\log 2$, the value of the function itself. Let us denote as $\alpha(f, z)$ the local H\"older exponent of the function $f$ at $z$ (see Section \ref{S:back} for a precise definition). It is well-known that not all binary sequences are indeed kneading sequences of unimodal maps: let us denote as $\mathcal{R}$  the set of all possible kneading angles. This coincides (up to possibly a set of Hausdorff dimension zero) with the set of angles of 
external rays which land on the real slice of the Mandelbrot set. Each connected component $U$ of the complement $[0, 1/2] \setminus \mathcal{R}$ corresponds to a real 
hyperbolic component, and it is well-known that the entropy at the two endpoints of $U$ is the same, hence one can extend the definition of entropy $h(\theta)$ to each value $\theta \in [0, 1/2]$ by setting it constant over each component $U$. 
Moreover, we define a plateau for the entropy function as an open interval in parameter space on which the entropy is constant. 

%We call an angle \emph{satellite} if the corresponding ray lands at the root of a satellite component of the Mandelbrot set. 

\begin{theorem}\label{T:main} 
For any $\theta \in \mathcal{R}$ with $\theta > \theta_\star$, the entropy function $h$ is locally H\"older continuous at $\theta$ with 
exponent $\frac{h(\theta)}{\log 2}$.
Moreover,
%\begin{itemize}
%\item[(i)] if $\theta \in \mathcal{R}$ is positively renormalizable, then $h$ is locally constant at $\theta$; 
%\item[(ii)] 
if $\theta \in \mathcal{R}$ does not lie in a plateau, then the local H\"older exponent of the entropy function at 
$\theta$ is related to the value of the function by the equation
$$\alpha(h, \theta) = \frac{h(\theta)}{\log 2}.$$
%\end{itemize}
\end{theorem}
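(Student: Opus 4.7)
The plan is to combine the Milnor--Thurston kneading determinant with an implicit function argument for the upper bound, and then exploit the non-plateau condition to manufacture matching perturbations for the lower bound. The starting point is the kneading power series: writing $(\epsilon_1,\epsilon_2,\ldots)$ for the kneading sequence attached to $\theta$, the growth rate $s(\theta) = e^{h(\theta)}$ is characterized as the reciprocal of the smallest real root in $(0,1)$ of a power series
$$K_\theta(t) = \sum_{n \geq 0} \eta_n(\theta)\, t^n,$$
whose coefficients $\eta_n(\theta) \in \{\pm 1\}$ depend only on $\epsilon_1,\ldots,\epsilon_n$. The hypothesis $\theta > \theta_\star$ guarantees $s(\theta) > 1$, so the root $t_0 := 1/s(\theta)$ lies strictly inside $(0,1)$.

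For the H\"older upper bound, pick $n$ so that $2^{-(n+1)} \le |\theta-\theta'| < 2^{-n}$; then the kneading sequences of $\theta$ and $\theta'$ agree in their first $n$ digits, and $|K_\theta(t) - K_{\theta'}(t)| = O(t^{n})$ uniformly on a compact neighborhood of $t_0$. An implicit function argument at the root $t_0$ of $K_\theta$ yields $|t_0(\theta) - t_0(\theta')| = O(t_0^n)$, and converting back to entropy gives
$$|h(\theta) - h(\theta')| = O\bigl(s(\theta)^{-n}\bigr) = O\bigl(|\theta-\theta'|^{\log s(\theta)/\log 2}\bigr) = O\bigl(|\theta-\theta'|^{h(\theta)/\log 2}\bigr),$$
which is the first assertion of the theorem.

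For the matching lower bound $\alpha(h,\theta) \le h(\theta)/\log 2$, I would construct admissible angles $\theta_n \to \theta$ realizing the exponent. Since $\theta$ does not lie on a plateau, the monotonicity of $h$ on $\mathcal{R}$ forces $h$ to vary at arbitrarily small scales; concretely, for each $n$ I would replace the tail of $\epsilon$ beyond position $n$ by the kneading word of a suitable dyadic or hyperbolic parameter, producing $\theta_n$ with $|\theta_n - \theta| \le C \cdot 2^{-n}$ and a definite shift $|K_{\theta_n}(t_0) - K_\theta(t_0)| \ge c \cdot t_0^n$. Running the implicit function step in reverse then gives $|h(\theta_n) - h(\theta)| \ge c' \cdot s(\theta)^{-n}$, which matches the upper exponent.

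The \emph{hard part} is this lower bound. Two delicate points must be addressed: the perturbed sequences must genuinely lie in $\mathcal{R}$, so one has to truncate and re-append admissible tails without violating the combinatorial constraints defining kneading sequences; and one needs quantitative non-cancellation of $K_{\theta_n}(t_0) - K_\theta(t_0)$, bounded from below uniformly in $n$. Both points are handled by the non-plateau hypothesis, which rules out the degenerate regime in which a perturbation is absorbed inside a hyperbolic component or a small Mandelbrot copy (so that entropy does not change), and which also guarantees that $t_0$ is a simple root of $K_\theta$, making the implicit function theorem applicable in both directions.
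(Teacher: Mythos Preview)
Your overall architecture---perturb the kneading series, track the smallest root, and build admissible approximants for the lower bound---matches the paper's. But there is a genuine gap: you assert that the non-plateau hypothesis ``guarantees that $t_0$ is a simple root of $K_\theta$,'' and this is both unjustified and misplaced. Simplicity is needed already for the \emph{upper} bound, which is claimed for every $\theta > \theta_\star$, including those on plateaus: if $t_0$ had multiplicity $m \geq 2$, your implicit-function step would only yield $|t_0(\theta)-t_0(\theta')| = O(t_0^{\,n/m})$, hence exponent $h(\theta)/(m\log 2)$ rather than $h(\theta)/\log 2$. The paper proves simplicity as a separate, nontrivial theorem: one semiconjugates $f$ via Milnor--Thurston to a piecewise linear map $g$ of the same entropy (for which simplicity of the leading root is classical, e.g.\ Preston), and then handles the case where the critical orbit of $f$ returns to the fibre over the turning point of $g$ by factoring $P_f(t) = \widetilde{P}_g(t)\,P_h(t^p)$ with $h$ the first-return map, checking that $r = e^{-h(f)}$ cannot be a root of the second factor. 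There is no shortcut through the non-plateau condition---a double root would still allow entropy to vary, just with a worse H\"older exponent---so this ingredient must be supplied independently.

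Two smaller points. Your claim that $2^{-(n+1)} \le |\theta-\theta'| < 2^{-n}$ forces the first $n$ binary digits to agree is false for arbitrary reals (think $.0111\ldots$ versus $.1000\ldots$); the paper uses the orbit characterization of $\mathcal{R}$ to prove the two-sided bound $c\,2^{-n} \le |\theta-\theta'| \le 2^{-n+1}$ with $c = 2(1-2\theta)$ when the first disagreement is at digit $n$, and this lower bound is exactly what converts $t_0^{\,n}$ into $|\theta-\theta'|^{h(\theta)/\log 2}$. For the lower bound, the non-plateau hypothesis is used in the paper not for simplicity but for non-cancellation: after restricting to a \emph{primitive} periodic $\theta$ (periodic with $D^k(\theta) \neq 1-\theta$ for all $k$), choosing a nearby periodic $\theta' \in \mathcal{R}$, and verifying by a direct orbit check that the concatenations $\theta_m = .\overline{s^m t}$ lie in $\mathcal{R}$ (this is the combinatorial work you allude to), one computes $P_\theta(r) - P_{\theta_m}(r) = g(r)\,r^{mP}/(1 - r^{P(m+1)})$ for an explicit polynomial $g$; the non-plateau condition then forces $g(r) \neq 0$, since otherwise $r_m = r$ for all large $m$.
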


\begin{figure} \label{F:entro}
\fbox{\includegraphics[width= 0.75 \textwidth]{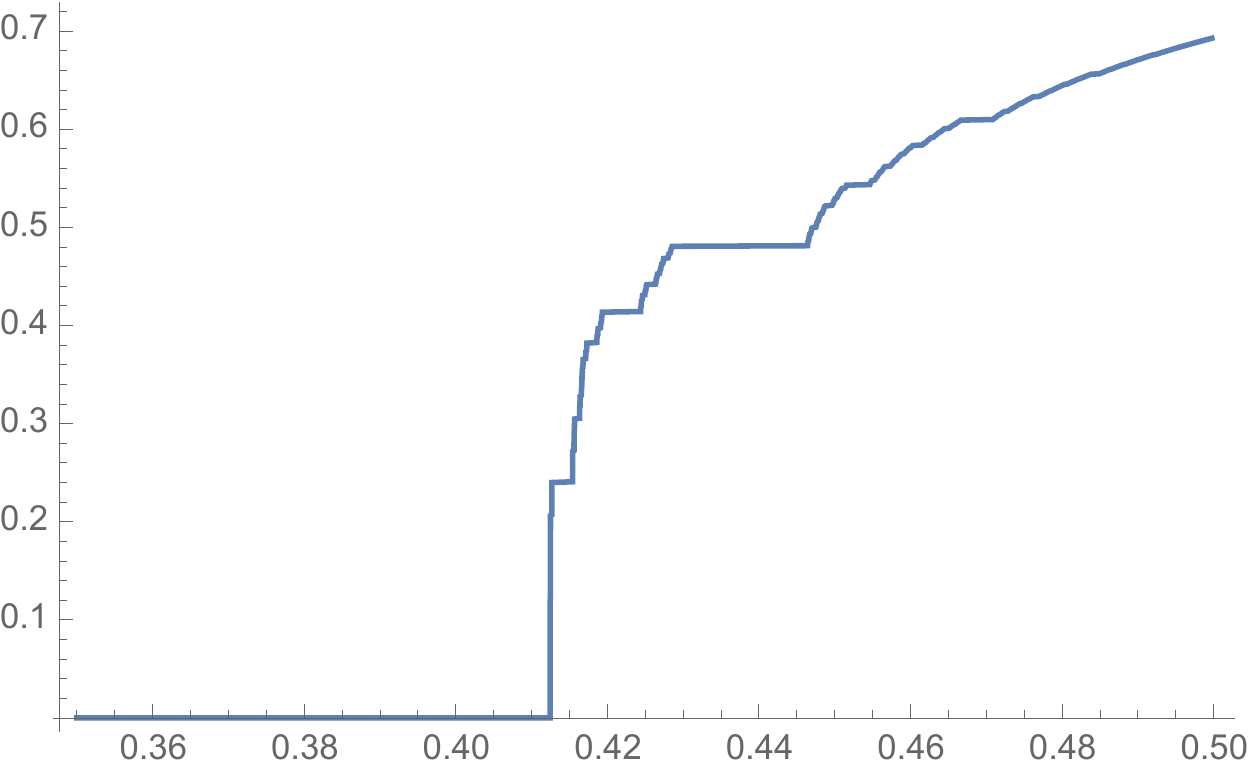}}
\caption{The entropy of real unimodal maps as a function of the external angle. Note the \emph{Feigenbaum angle} $\theta_\star  = 0.412454\dots$ where $h(\theta_\star) = 0$ and the function is continuous but not H\"older continuous. To the right of $\theta_\star$, the function is locally H\"older continuous, and becomes
more and more regular; it is actually almost Lipschitz continuous near the right endpoint $\theta = 1/2$.}
\end{figure}

\noindent This result confirms experimental evidence by Isola-Politi (\cite{IP}, page 282). 
%In our paper, we look at the dependence with respect to the combinatorial parameter, and use only combinatorial tools. 
The proof turns out to be a simple computation using the fact that entropy is the zero of a power series known as the \emph{kneading series}: 
basically, two nearby angles will produce power series with the same leading coefficients, and one needs to estimate how the zeros of a 
power series change as the coefficients change. In order to get the lower bound, however, one needs to 
analyze carefully the combinatorics of the set $\mathcal{R}$.

By a classical result of Guckenheimer \cite{Gu}, the topological entropy of any $C^1$ family of unimodal maps is a H\"older continuous function of the parameter.
More recently, a formula for the H\"older exponent of the entropy with respect to the analytic parameter has been obtained in \cite{DM}; 
in this paper, we look instead at the dependence on the combinatorial parameter.

%For the quadratic family $f_c(z) := z^2+c$, Guckenheimer \cite{Gu} proved that the topological entropy 
%of any $C^1$ family of unimodal maps 
%is a H\"older continuous function of the parameter $c$. 

A natural generalization of this discussion would be to extend the result to the \emph{core entropy} for the complex quadratic family, 
as defined by W. Thurston and studied in \cite{Ti}, \cite{DS}. 
In particular, it is proven in \cite{Ti} that the entropy $h(\theta)$ is locally H\"older continuous in a neighbourhood of rational external angles $\theta$ with $h(\theta) > 0$, 
and it is not locally H\"older where $h(\theta) = 0$.  %has been proven in \cite{Ti} and 
The exact value for the H\"older exponent has been conjectured independently by the author (see e.g. \cite{CT}, end of introduction) and \cite{BS}.
A proof of it has recently been announced by M. Fels (private communication).

\subsection*{Relation to open dynamical systems}

Theorem \ref{T:main} can be also reformulated in terms of open dynamical systems, and in this setting it is very close 
to the results (and the proof) of \cite{CT}. 

Let $D(x) := 2x \mod 1$ be the doubling map, and for each $\theta \in [0,1/2]$ let us consider
the set $K(\theta)$ of points in the circle whose forward orbit never intersects the interval $(\theta, 1-\theta)$. 
$$K(\theta) := \{ x \in \mathbb{R}/\mathbb{Z} \ : \ D^n(x) \notin (\theta, 1 - \theta) \quad \forall n \geq 0 \}$$
Then for each $\theta \in [0, 1/2]$ one has 
$$\textup{H.dim }K(\theta) = \frac{h(\theta)}{\log 2}.$$

\begin{corollary} \label{C:main}
Consider for each $\theta \in [0,1/2]$ the dimension function
$$\eta(\theta) := \textup{H.dim }K(\theta).$$ 
Then, for each $\theta$ not in a plateau, the local H\"older exponent of the dimension function satisfies 
$$\alpha(\eta, \theta) = \eta(\theta).$$
\end{corollary}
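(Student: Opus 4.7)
The plan is to deduce the corollary directly from Theorem \ref{T:main} via the identity $\eta(\theta) = h(\theta)/\log 2$ already recorded in the discussion above. Since $\eta$ is a positive constant multiple of $h$, the corollary should reduce immediately to the theorem's statement, so the whole proof amounts to checking that the local Hölder exponent is insensitive to such rescaling.

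First I would verify this scaling invariance. Using the standard formulation
$$\alpha(f, z) = \liminf_{\theta' \to z} \frac{\log |f(\theta') - f(z)|}{\log |\theta' - z|},$$
the substitution $f \mapsto c f$ with $c > 0$ adds the bounded term $\log c$ to the numerator. Once divided by $\log |\theta' - z| \to -\infty$ this contribution vanishes, so $\alpha(c f, z) = \alpha(f, z)$. Taking $c = 1/\log 2$ gives $\alpha(\eta, \theta) = \alpha(h, \theta)$ for every $\theta \in [0,1/2]$.

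Second, I would apply Theorem \ref{T:main} to the given $\theta$. Since $\theta$ does not lie in a plateau, the entropy is not locally constant at $\theta$, and in particular $\theta$ must be a kneading angle (plateaus account for all components of $[0,1/2]\setminus \mathcal{R}$). For $\theta > \theta_\star$ the theorem gives $\alpha(h,\theta) = h(\theta)/\log 2$, whence $\alpha(\eta,\theta) = \eta(\theta)$ by the previous step.

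The only genuinely delicate point is the Feigenbaum parameter $\theta_\star$ itself, where $\eta(\theta_\star) = 0$ and the corollary predicts $\alpha(\eta,\theta_\star) = 0$. This reflects precisely the failure of local Hölder continuity at $\theta_\star$ highlighted in the figure caption, and if it is not already subsumed by the second clause of Theorem \ref{T:main} it must be obtained by a separate analysis of how $h(\theta)$ approaches zero as $\theta \to \theta_\star^+$, e.g.\ from the behaviour of the kneading series in this limit. Handling this boundary point cleanly is the only step I expect to require work beyond an immediate rescaling of Theorem \ref{T:main}.
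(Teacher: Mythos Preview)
Your reduction to Theorem~\ref{T:main} via the identity $\eta=h/\log 2$ is exactly the paper's strategy, but the emphasis is different: the paper's proof of the corollary is devoted almost entirely to \emph{establishing} that identity (through the Carath\'eodory loop $\gamma:\mathbb{R}/\mathbb{Z}\to J$, the characterisation of $\gamma^{-1}(J\cap\mathbb{R})$ as the non-escaping set for $D$, and the fact that $D$ is uniformly expanding with derivative $2$), after which it simply says ``thus the Corollary follows directly from Theorem~\ref{T:main}.'' You instead take the identity as already given by the sentence preceding the corollary, which is a defensible reading of the paper but skips the only substantive content of the proof as written.

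Two smaller remarks. First, the formula you quote for $\alpha(f,z)$ is not the paper's definition (which involves a two-point supremum over $x,y$ near $z$, not a one-point $\liminf$); the scaling invariance $\alpha(cf,z)=\alpha(f,z)$ is of course immediate from either definition, so your conclusion survives, but you should use the paper's version. Second, your worry about $\theta_\star$ is legitimate and the paper handles it separately in Section~\ref{S:Feig}, showing the modulus of continuity there is of order $1/\log(1/x)$, hence $\alpha(h,\theta_\star)=0=\eta(\theta_\star)$; you are right that this case is not automatic from the first clause of Theorem~\ref{T:main}.
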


A completely analogous statement for the set of points whose orbit does not intersect the forbidden interval $(0,t)$ is proven in \cite{CT}. 
The continuity of entropy (or dimension) for expanding circle maps with holes has been established in the 1980s be Urba{\'n}ski \cite{Urb},
 while bounds on the H\"older exponent of the dimension for more general holes are proven in \cite{BR}. 

Note that the dimension function of \cite{CT} and the one considered in this paper are genuinely different functions: for example, we will check 
in Section \ref{S:Feig} that the modulus of continuity of $h$ near $\theta_\star$ is of order $\frac{1}{\log(1/x)}$, while for the dimension function 
considered in \cite{CT} the modulus of continuity at the point where it is not H\"older is of order $\frac{\log \log (1/x)}{\log(1/x)}$.

\subsection*{Acknowledgments}
This paper is dedicated to the memory of Tan Lei. It would be hard for me to overstate the support and encouragement I got from her, and 
I will be always grateful. Many of the ideas we discussed are still to be explored, and I very much hope they will be eventually worked out
with the help of the mathematical community.

\section{Background material} \label{S:back}

\subsection*{H\"older exponents} 
Let $I \subseteq \mathbb{R}$ be an interval.  A function $f : I \to \mathbb{R}$ is \emph{locally H\"older continuous} at a point $x$ of exponent $\alpha$ 
if there exists a neighborhood $U$ of $x$ and a constant $C > 0$ such that 
$$|f(y) - f(z)| \leq C |y-z|^\alpha \qquad \textup{for all } y, z \in U.$$
The \emph{local H\"older exponent} of $f$ at $z$ is 
$$\alpha(f, z) := \sup \left\{ \eta > 0 \ : \ \lim_{\epsilon \to 0} \sup_{\stackrel{|x-z| < \epsilon}{|y-z| < \epsilon}} \frac{|f(x) - f(y)|}{|x-y|^\eta} < + \infty  \right\}. $$
By definition, $f$ is locally H\"older continuous at $z$ if and only if $\alpha(f, z) > 0$. 

\subsection*{Kneading theory}
Let now $I = [0,1]$. Recall that a \emph{unimodal map} is a continuous function $f : I \to I$ with $f(0) = f(1) = 0$, and for which there exists a point $c \in (0,1)$, which we call \emph{critical point}, such that $f$ is increasing on $[0, c)$ and decreasing on $(c, 1]$. In order to capture the symbolic dynamics of 
$f$, one defines the \emph{address} of a point $x \neq c$ as 
$$A(x)  := \left\{ \begin{array}{ll} 0 & \textup{if } x < c \\
1 & \textup{if } x > c.\\
\end{array} \right.$$
The \emph{kneading sequence} of $f$ is then defined as the sequence of addresses of the iterates of the critical point; namely, 
for any $k \geq 1$ set, if $f^k(c) \neq c$, 
$$s_k := A(f^k(c))$$
%$$\left\{ \begin{array}{ll} 0 & \textup{if } f^k(c_0) < c_0 \\
%1 & \textup{if } f^k(c_0) > c_0\\
%\end{array} \right.$$
while, if $f^k(c) = c$, then set 
$$s_k := \lim_{x \to c} A(f^k(x))$$
which is still well-defined as $f$ ``folds" a neighbourhood of $c$. Finally, one defines the \emph{kneading angle} as 
$$\theta := \sum_{k = 1}^\infty \frac{\theta_k}{ 2^{k+1}} \qquad \textup{with}\qquad 
\theta_k := s_1 + \dots + s_k \mod 2.$$
%Let $\theta = \sum_{k = 0}^\infty \theta_k 2^{-k-1}$ be the binary expansion of $\theta$. 
Then the \emph{kneading series} associated to the angle $\theta$ is the power series
$$P_\theta(t) = 1 + \sum_{k = 1}^\infty \epsilon_{k} t^k$$
where $\epsilon_k = (-1)^{\theta_k}$. Note that the coefficients of $P_\theta(t)$ are uniformly bounded, hence the power series 
defines a holomorphic function in the unit disk $\{ t \in \mathbb{C} \ : \ |t| < 1\}$. The main result of kneading theory is the following:

\begin{theorem}[\cite{MT}]
Let $f$ be a unimodal map with kneading angle $\theta$ and topological entropy $h(f)$, and let $r := e^{-h(f)}$. Then the 
holomorphic function 
$P_\theta(t)$ is non-zero on the disk $\{ t \in \mathbb{C} \ : \ |t| < r\}$, and  if $r < 1$ one has 
$$P_\theta(r) = 0.$$
\end{theorem}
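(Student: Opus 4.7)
The plan is to derive this classical theorem of Milnor--Thurston from a multiplicative identity relating the kneading series $P_\theta(t)$ to a lap-counting generating function whose radius of convergence is governed by the entropy.

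First, I would invoke the Misiurewicz--Szlenk formula expressing the topological entropy as the exponential growth rate of the lap numbers: if $\ell_n$ denotes the number of maximal intervals of monotonicity of $f^n$, then
\[
h(f) = \lim_{n \to \infty} \frac{1}{n} \log \ell_n.
\]
Consequently the generating function
\[
L(t) := \sum_{n \geq 1} \ell_n\, t^{n-1}
\]
has positive coefficients and radius of convergence exactly $r = e^{-h(f)}$.

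Second, I would establish the fundamental kneading--lap identity, which for a unimodal map with $f(0)=f(1)=0$ takes the form
\[
P_\theta(t) \cdot L(t) = \frac{1}{1-t}
\]
(possibly up to an innocuous polynomial factor coming from boundary conventions). The derivation tracks how laps of $f^{n+1}$ arise from laps of $f^n$: a lap $J$ of $f^n$ splits under one more iteration precisely when $f^n(J) \ni c$, producing a new monotonicity interval whose orientation is dictated by that of $f^n$ on $J$. The signs $\epsilon_k = (-1)^{\theta_k}$ defining $P_\theta$ record exactly the local orientation of $f^k$ near $c$, so summing the resulting telescoping recursion for $\ell_{n+1} - \ell_n$ and weighting each splitting by the appropriate $\epsilon_k t^k$ yields the identity above, first as an equality of formal power series, then as an identity of holomorphic functions wherever both sides converge. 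This signed combinatorial bookkeeping is the main technical obstacle.

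Third, the two conclusions follow by straightforward complex analysis. Since $\frac{1}{1-t}$ is holomorphic and non-vanishing on the open unit disk, and $L(t)$ is holomorphic on $\{|t|<r\}$, the identity forces $P_\theta(t) \ne 0$ for all $|t| < r$, giving the first claim. For the vanishing at $t = r$ when $r < 1$, I would invoke Pringsheim's theorem applied to the positive-coefficient series $L(t)$: it guarantees that $t = r$ is a genuine singularity of $L$ on its circle of convergence. Rewriting the identity as $L(t) = \frac{1}{(1-t)\, P_\theta(t)}$ and noting that $\frac{1}{1-t}$ is holomorphic and non-vanishing at $t=r$ (since $r<1$), a non-zero value $P_\theta(r) \ne 0$ would extend $L$ holomorphically past $r$, contradicting Pringsheim. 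Therefore $P_\theta(r) = 0$, which completes the proof.
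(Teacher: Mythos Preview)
The paper does not give its own proof of this statement: it is quoted verbatim as a result of Milnor--Thurston \cite{MT} in the background section, with no argument supplied. Your sketch is in fact the classical Milnor--Thurston proof (lap-counting growth rate, the kneading identity $P_\theta(t)\,L(t)=\frac{1}{1-t}$, then Pringsheim on the positive-coefficient series $L$), and the logic is sound: the identity on $\{|t|<r\}$ forbids zeros of $P_\theta$ there since the right-hand side never vanishes, and if $P_\theta(r)\ne 0$ the rewriting $L(t)=\frac{1}{(1-t)P_\theta(t)}$ would continue $L$ analytically past $r<1$, contradicting Pringsheim's theorem.

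The only point I would sharpen is your hedge ``possibly up to an innocuous polynomial factor'': with the paper's specific conventions ($f(0)=f(1)=0$, $\epsilon_k=(-1)^{\theta_k}$, and $P_\theta(t)=1+\sum_{k\ge1}\epsilon_k t^k$) the identity holds exactly in the form you wrote, so you may as well commit to it. The combinatorial recursion you describe for $\ell_{n+1}-\ell_n$ is indeed the substantive step and would need to be written out carefully in a full proof, but your outline of it (tracking which laps of $f^n$ contain $c$ in their image, with the sign coming from the local orientation) is the right mechanism.
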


\subsection*{The set of real angles}
It is well-known that not all binary sequences are indeed kneading sequences of unimodal maps: 
let us denote as $\mathcal{R}$  the set of all possible kneading angles. This coincides (up to possibly a set of Hausdorff dimension zero) with the set of angles of 
external rays which land on the real slice of the Mandelbrot set. 

The angles corresponding to real parameters are characterized in terms of the dynamics of the angle doubling map 
$D(\theta) := 2\theta \mod 1$. In fact, from \cite{Do} one as the identity 
$$\mathcal{R} = \{ \theta \in [0, 1/2] \ :  \ D^n(\theta) \notin (\theta, 1- \theta) \ \textup{for all }n \geq 0\}.$$
(Obviously, the set of angles of external rays landing on the real slice of the Mandelbrot set is symmetric about $1/2$, but we will only focus 
on the interval $[0, 1/2]$ in this paper.)
Many facts are known about the structure of the set $\mathcal{R}$. In particular, it is a closed set of Lebesgue measure zero and 
Hausdorff dimension one \cite{Za}. Moreover, one knows that for each angle $\theta \in \mathcal{R}$ which is purely periodic for the 
doubling map, one can produce its \emph{period doubling} $\theta'$ as follows: if $\theta$ has (minimal) period $p$ and its binary expansion is 
$$\theta = .\overline{s_1 \dots s_p},$$ 
then we define 
$$\theta' = .\overline{s_1 \dots s_p \check{s}_1 \dots \check{s}_p }$$ 
where $\check{s}_i = 1-s_i$. The following lemma is well-known.

\begin{lemma} \label{L:pd}
If $\theta'$ is the period doubling of $\theta \in \mathcal{R}$, then $\theta' \in \mathcal{R}$, and moreover
$$h(\theta') = h(\theta).$$
\end{lemma}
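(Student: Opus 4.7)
The lemma has two assertions: $\theta' \in \mathcal{R}$ and $h(\theta') = h(\theta)$. I would handle these in turn, using the dynamical characterization of $\mathcal{R}$ for the first and the kneading series for the second.

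For $\theta' \in \mathcal{R}$, the plan is to use $\mathcal{R} = \{\eta \in [0, 1/2] : D^n(\eta) \notin (\eta, 1-\eta) \text{ for all } n \geq 0\}$. The crucial observation is that shifting the binary expansion of $\theta'$ by $p$ positions flips every bit, so $D^p(\theta') = 1 - \theta'$ and hence $D^{p+k}(\theta') = 1 - D^k(\theta')$ for $0 \leq k < p$. The doubling orbit of $\theta'$ therefore decomposes into $p$ pairs symmetric about $1/2$, and it suffices to check that $D^k(\theta') \notin (\theta', 1-\theta')$ for $0 \leq k < p$. For each such $k$, the first $p-k$ binary digits of $D^k(\theta')$ coincide with those of $D^k(\theta)$, and likewise the first $p-k$ digits of $\theta'$ agree with those of $\theta$; when the inequality $D^k(\theta) \leq \theta$ or $D^k(\theta) \geq 1 - \theta$ granted by $\theta \in \mathcal{R}$ is already decided by these first $p-k$ bits, it transfers verbatim to $D^k(\theta')$ against $\theta'$. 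A direct bit-level argument covers the remaining situation, where the first $p-k$ bits agree: any hypothetical $D^k(\theta') \in (\theta', 1-\theta')$ would force $D^k(\theta) \in (\theta, 1-\theta)$, contradicting admissibility of $\theta$.

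For the entropy equality, set $Q(t) := \sum_{k=1}^{p} \epsilon_k t^k$ with $\epsilon_k = (-1)^{\theta_k}$. Summing the geometric series arising from the period-$p$ periodicity of $(\epsilon_k)$ yields
\[
P_\theta(t) \;=\; \frac{N(t)}{1-t^p}, \qquad N(t) \;:=\; 1 - t^p + Q(t).
\]
For $\theta'$, the coefficients $\epsilon'_k$ have period $2p$ with $\epsilon'_k = \epsilon_k$ for $1 \leq k < p$, $\epsilon'_{p+k} = -\epsilon_k$ for $0 < k < p$, together with boundary values $\epsilon'_p = -1$ and $\epsilon'_{2p} = +1$ (both forced by the normalization $\theta_p = 0$, which holds because the binary expansion of $\theta \in [0, 1/2]$ begins with $0$, so that $\epsilon_p = +1$). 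A direct computation then produces
\[
P_{\theta'}(t) \;=\; \frac{N(t)}{1+t^p}
\]
with exactly the same numerator $N(t)$. Since the denominators $1 \pm t^p$ vanish only on $|t| = 1$, they are strictly positive on $(0,1)$ and cannot cancel any zero of $N(t)$ inside the unit disk. The smallest zero of $N(t)$ in $|t| < 1$ therefore simultaneously equals $e^{-h(\theta)}$ and $e^{-h(\theta')}$ by Milnor-Thurston, whence $h(\theta) = h(\theta')$.

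The main obstacle is the combinatorial first part: while the symmetry $D^p(\theta') = 1 - \theta'$ and the matching of initial bits dispatch most cases transparently, the boundary situation where $D^k(\theta)$ agrees with $\theta$ on its first $p-k$ bits requires a careful bit-by-bit argument to establish that admissibility passes from $\theta$ to $\theta'$. The entropy equality is by contrast a clean algebraic computation once the transformation rule $\epsilon_k \mapsto \epsilon'_k$ under period doubling has been written down.
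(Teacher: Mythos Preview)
Your argument for the entropy identity is exactly the paper's: writing $P_\theta(t)=N(t)/(1-t^p)$ and $P_{\theta'}(t)=N(t)/(1+t^p)$ is equivalent to the paper's one-line relation $P_{\theta'}(t)=P_\theta(t)\,\frac{1-t^p}{1+t^p}$, and both conclude by noting that $1\pm t^p$ are zero-free in the open unit disk. Your derivation is simply more explicit; one small remark is that the claim $\theta_p=0$ (hence $\epsilon_p=+1$) really uses periodicity together with the fact that the \emph{first} ordinary binary digit is $0$: in the paper's convention $\theta=\sum_{k\ge1}\theta_k 2^{-(k+1)}$, so the $(p+1)$st ordinary digit equals $\theta_p$, and by period $p$ this coincides with the first digit $s_1=0$.

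The paper does not prove the membership $\theta'\in\mathcal{R}$ at all, invoking it as well-known. Your dynamical argument via $D^p(\theta')=1-\theta'$ (hence the orbit of $\theta'$ consists of $p$ symmetric pairs, so it suffices to check $0\le k<p$) is the natural direct route and is correct in outline; as you note, the only delicate point is the borderline case where $D^k(\theta)$ and $\theta$ agree on their first $p-k$ bits, which requires a short separate check. So your proposal matches the paper where the paper gives a proof, and supplies the omitted half with a sound strategy.
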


\begin{proof}
It is immediate to check by the definitions that the two kneading series are related by
$$P_{\theta'}(t) = P_\theta(t) \frac{1- t^p}{1+t^p}$$
from which the claim holds, as $ \frac{1- t^p}{1+t^p}$ does not have any roots inside the unit disk.
\end{proof}

\noindent Moreover, the connected components of the complement of $\mathcal{R}$ are precisely
$$[0, 1/2] \setminus \mathcal{R} = \bigcup_{\theta \in \mathcal{R}^0} (\theta, \textup{pd}(\theta))$$
where $\textup{pd}(\theta)$ is the period doubling of $\theta$, and 
$\mathcal{R}^0$ is the subset of $\mathcal{R}$ consisting of purely periodic angles.
Note that $\theta = 0 = .\overline{0}$ belongs to $\mathcal{R}$, and the equation above is correct if one interprets its period doubling to be 
$\textup{pd}(0) = .\overline{01} = \frac{1}{3}$.

%One defines the \emph{kneading sequence} as $(\eta_k)_{k \geq 1} := \textup{sign}(f^k(c))$.
%$$\eta_k := \lim_{x \to c} \textup{sign}(f^k(x))$$
%$$\epsilon_k := \eta_1 \eta_2 \dots \eta_k \epsilon_0$$

For each purely periodic $\theta \in \mathcal{R}$, one defines the \emph{small copy} of root $\theta$ as the interval 
$$I(\theta) := (\theta, \overline{\theta})$$
where $\overline{\theta}$ has binary expansion $\overline{\theta} := .s_1\dots s_p \overline{\check{s}_1 \dots \check{s}_p}$.
The reason for the name ``small copy" is that the interval $I(\theta)$ corresponds to the set of external rays landing on the real slice of the small 
copy of the Mandelbrot set with root of external angle $\theta$. 

\begin{lemma}[\cite{Do}]
If $h(\theta) > 0$, then the entropy is constant on the small copy $I(\theta)$.
\end{lemma}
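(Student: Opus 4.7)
The plan is to combine the monotonicity of $h$ on $[0,1/2]$ (property (1) above) with a direct factorization of the kneading series at the right endpoint of $I(\theta)$, in the spirit of the proof of Lemma \ref{L:pd}.

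Since $h$ is weakly increasing on $[0,1/2]$, equality of $h$ at the endpoints $\theta$ and $\overline{\theta}$ forces $h$ to be constant on the closed interval $[\theta, \overline{\theta}]$, and in particular on $I(\theta)$. Hence it suffices to show $h(\theta) = h(\overline{\theta})$.

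Next I would compute $P_{\overline{\theta}}(t)$ directly from the binary expansion $\overline{\theta} = .s_1\dots s_p \overline{\check s_1\dots \check s_p}$. Splitting the kneading series into its initial block of length $p$ and the $p$-periodic tail (whose sign pattern is bit-complementary to that of $\theta$), and summing the resulting geometric series exactly as in the computation of Lemma \ref{L:pd}, one obtains the factorization
$$P_{\overline{\theta}}(t) = P_\theta(t)\,(1 - 2t^p).$$
Since the auxiliary factor $1-2t^p$ has smallest positive zero $2^{-1/p}$ and $P_\theta(t)$ has smallest positive zero $r = e^{-h(\theta)}$ in $(0,1)$, this yields
$$h(\overline{\theta}) = \max\!\Bigl(h(\theta),\ \tfrac{\log 2}{p}\Bigr).$$

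It remains to show that the hypothesis $h(\theta) > 0$ implies $h(\theta) \ge (\log 2)/p$, so that the maximum above simplifies to $h(\theta)$. Writing $P_\theta(t) = N(t)/(1 - t^p)$ with $N(t) = 1 + \epsilon_1 t + \dots + \epsilon_{p-1}t^{p-1}$ a polynomial of coefficients $\pm 1$, any root $r \in (0,1)$ of $N$ satisfies the elementary bound $r^p \le 2r - 1$ (by taking absolute values in the identity $1 = -\sum_{k\ge 1}\epsilon_k r^k$); this alone only yields $r \ge 1/2$, so to obtain the sharper bound $r \le 2^{-1/p}$ one must exploit the admissibility condition $D^n(\theta)\notin(\theta,1-\theta)$ for all $n\ge 0$.

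I expect this last estimate to be the main obstacle. A natural route is induction on the period $p$: if $\theta$ is contained in a strictly larger small copy $I(\theta')$ with $\theta'$ of period $p' < p$, then $h(\theta) \ge h(\theta')$ by monotonicity and the inductive hypothesis yields $h(\theta') \ge (\log 2)/p' \ge (\log 2)/p$; otherwise the base case is handled by a direct combinatorial analysis of the admissible sign patterns of $N$ forced by the avoidance condition on the doubling orbit.
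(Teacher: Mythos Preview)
The paper does not prove this lemma; it is quoted from Douady \cite{Do} without argument. So there is no ``paper's proof'' to compare against, and your task is really to supply a self-contained argument.

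Your factorization $P_{\overline{\theta}}(t) = P_\theta(t)(1-2t^p)$ is correct (the computation goes through exactly as in Lemma~\ref{L:pd}, and one can check it on examples such as $\theta = 3/7$), and together with monotonicity it does reduce the lemma to the inequality
\[
h(\theta) \ \geq\ \frac{\log 2}{p} \qquad \text{whenever } \theta \in \mathcal{R} \text{ has period } p \text{ and } h(\theta) > 0.
\]
Note, however, that this inequality is \emph{equivalent} to the lemma, not a weakening of it: given the factorization, ``entropy constant on $I(\theta)$'' holds iff $h(\overline{\theta}) = h(\theta)$ iff $\max\bigl(h(\theta), (\log 2)/p\bigr) = h(\theta)$. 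So you have reformulated the statement rather than reduced it. (Incidentally, the paper itself invokes exactly this inequality, unproved, inside the proof of Theorem~\ref{T:simple}.)

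Your inductive sketch for the inequality has a genuine gap. In the inductive step you place $\theta$ in a strictly larger small copy $I(\theta')$ with $\theta'$ of period $p' < p$ and then appeal to the inductive hypothesis for $\theta'$. But that hypothesis requires $h(\theta') > 0$, which need not hold: $\theta'$ may be one of the period-doubling roots $.\overline{S_k}$ of period $2^k$, all of which have entropy zero, and $\theta$ can sit inside such a copy while still having $h(\theta) > 0$. Handling that case requires the renormalization identity $h(\theta' \star \phi) = h(\phi)/p'$ for zero-entropy $\theta'$, itself a nontrivial input. The base case --- $\theta$ not contained in any proper small copy --- is left as ``a direct combinatorial analysis'', which is precisely where the substance lies. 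The factorization is a clean first step, but the argument as written stops at the point where Douady's work begins.
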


\section{Simplicity of the minimal root}

We start by proving that the smallest root of the kneading determinant is actually simple. This fact may be of independent interest, and is probably 
known to experts even though we could not find it in the literature.

\begin{theorem} \label{T:simple}
Let $f : I \to I$ be a unimodal map with topological entropy $h(f) > 0$.  Denote as $P(t)$ its kneading series, and let $s = e^{h(f)}$. Then $r = \frac{1}{s}$ is a simple root of $P(t)$.
\end{theorem}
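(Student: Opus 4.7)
\noindent The plan is to reduce the simplicity claim to a sharp growth estimate on an appropriate dynamical counting sequence, via the Milnor--Thurston formalism relating the kneading determinant to dynamical generating functions. For a unimodal map $f$ with $h(f) > 0$, Milnor--Thurston theory \cite{MT} expresses a lap-counting generating function
$$L(t) := \sum_{n=0}^{\infty} \ell_n(f)\, t^n$$
(with $\ell_n(f)$ the number of laps of $f^n$, possibly restricted to an invariant interval) as a rational expression with $P(t)$ in the denominator and a rational numerator $R(t)$ that is holomorphic and nonvanishing at $t = r := 1/s \in (0, 1)$. Consequently, the order $m$ of the zero of $P$ at $r$ equals the order of the pole of $L$ at $r$, which by a standard expansion of a rational function around a pole corresponds to the asymptotic behavior
$$\ell_n(f) \,\sim\, C\, n^{m-1}\, s^n \quad \textup{as } n \to \infty$$
for some $C > 0$. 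Thus simplicity ($m = 1$) is equivalent to the two-sided bound $c\, s^n \leq \ell_n(f) \leq C\, s^n$ for some constants $c, C > 0$.

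\noindent The lower bound is elementary: from submultiplicativity $\ell_{n+m} \leq \ell_n\, \ell_m$ together with Fekete's lemma, one obtains $\ell_n \geq s^n$. The substantive content is the matching upper bound $\ell_n \leq C\, s^n$, which reflects a spectral-gap property of the dynamics. I would extract it from the Hofbauer tower (canonical countable Markov extension) of $f$: for a unimodal map with positive entropy, this tower admits a unique irreducible component carrying full entropy, and by the Vere-Jones theory of positive-recurrent countable Markov shifts, the spectral radius $s$ of the induced transfer operator is a simple leading eigenvalue, yielding the desired bound $\ell_n \leq C\, s^n$.

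\noindent The main obstacle is precisely this upper bound. Submultiplicativity alone (equivalently, the Misiurewicz--Szlenk identity $h = \lim_n \frac{1}{n} \log \ell_n$) gives only $\ell_n^{1/n} \to s$, which is too weak to separate $m=1$ from $m\geq 2$. One must either pass through the Hofbauer tower and apply the spectral theory of countable Markov shifts as above, or invoke an alternative structural input such as the uniqueness of the measure of maximal entropy for unimodal maps of positive entropy (Hofbauer, Keller) to extract the required spectral gap.
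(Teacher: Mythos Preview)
Your outline is a valid alternative route, but it diverges from the paper's argument in both structure and the machinery invoked. The paper does not attempt to control the growth of $\ell_n$ directly. Instead, it first disposes of the piecewise linear case by citing Preston \cite[Proposition~9.6]{Preston}, which already gives that $r$ is a simple pole of $L(t)$ (hence a simple zero of $P$). For a general unimodal $f$ with $h(f)>0$, the paper uses the Milnor--Thurston semiconjugacy $\pi$ to a piecewise linear model $g$ of slope $\pm s$, and splits into two cases according to whether the critical orbit of $f$ ever enters the fiber $L=\pi^{-1}(\widetilde c)$. If it never does, then $P_f=P_g$ and one is done. If it returns at time $p$, the paper exhibits the factorization $P_f(t)=\widetilde P_g(t)\,P_h(t^p)$, where $h=f^p|_L$ is the unimodal first return map and $\widetilde P_g(t)=(1-t^p)P_g(t)$; an elementary entropy--period inequality ($p\ge 2^k$ when $h(g)\le (\log 2)/2^{k-1}$) then shows that every root of $P_h(t^p)$ has modulus strictly greater than $r$, so the simple zero of $P_g$ at $r$ persists as a simple zero of $P_f$.

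Your approach, by contrast, trades this combinatorial factorization for the spectral theory of the Hofbauer tower. That is legitimate, and the reduction ``$\ell_n\le C s^n$ $\Rightarrow$ the pole of $L$ at $r$ is simple'' is sound (most cleanly via the real-axis estimate $L(t)\le \sum_n C s^n t^n = C/(1-st)$ as $t\to r^-$, rather than via the asymptotic $\ell_n\sim C n^{m-1}s^n$ you state, which would require knowing there are no other singularities on $|t|=r$). The cost is that the upper bound $\ell_n\le C s^n$ is precisely the nontrivial content: you are effectively reproving Hofbauer's positive-recurrence/intrinsic-ergodicity theorem for piecewise monotone maps, and then transporting it back to lap counts. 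This works, but it is heavier and less self-contained than the paper's semiconjugacy-plus-factorization argument, which stays entirely within kneading theory once the piecewise linear case is granted.
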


\begin{proof}
Let us assume first that $f$ is piecewise linear with slope $\pm s$. Then, by \cite[Proposition 9.6]{Preston} $r$ is a simple pole of $L(t)$, hence it is a simple zero of $P(t)$.
Now, in the general case, by Milnor-Thurston \cite{MT}, for any unimodal map of entropy $h(f) = \log s$ there exists a semiconjugacy $\pi : I \to J$ of $f$ to a piecewise linear unimodal map $g : J \to J$ of slope $\pm s$. That is, there is a continuous map $\pi : I \to J$ such that $\pi \circ f = g \circ \pi$.
%Suppose $h(f) > \frac{\log 2}{2}$. 
Denote as $c$ the critical point of $f$, and let $\widetilde{c} = \pi(c)$ be the critical (or \emph{turning}) point of $g$. Moreover, let $L := \pi^{-1}(\widetilde{c})$, which is a closed interval containing $c$. There are two cases: 
\begin{enumerate}
\item
either $f^n(c) \notin L$ for all $n \geq 1$. This implies that 
$$P_f(t) = P_g(t)$$
hence the claim follows, since from above we know that $r$ is a simple root of $P_g(t)$.
\item 
Otherwise, there exists $n$ such that $f^n(c) \in L$. Let $p$ be the smallest such $n$. This implies that $g^p(\widetilde{c}) = g^p(\pi(c)) = \pi(f^p(c)) = \widetilde{c}$, since $f^p(c) \in L$. Then we get the factorisation
$$P_f(t) = \widetilde{P}_g(t) P_h(t^p)$$
where $h = f^p\mid_L$ is the first return map of $f$ to $L$, which is also a unimodal map, and $\widetilde{P}_g(t)$ is the polynomial (of degree $p-1$)
such that 
$$P_g(t) = \frac{\widetilde{P}_g(t)}{1-t^p}.$$
We now claim that $r$ is a root of 
$P_g(t)$ and \emph{not} a root of $P_h(t^p)$, which implies by the above factorisation that 
$r$ is a simple root of $P_f(t)$. 
%Now, by \cite[Proposition 9.6]{Preston} $r$ is a simple pole of $L_g(t) = \ell(g, t)$, hence it is a simple zero of $P_g(t)$.
%To prove the claim, note that if $g$ is a superattracting unimodal map with 
In order to prove the claim, let $k \geq 1$ be the integer such that 
%\begin{equation}\label{E:h-in}
$$\frac{\log 2}{2^k} < h(g) \leq \frac{\log 2}{2^{k-1}}.$$
%\end{equation}
Note that the above inequality implies that the period $p$ of $g$ must be at least $2^k$. 
%Thus, if we choose $k$ to be the largest integer $\geq 0$ such that \eqref{E:h-in} holds, 
%by the above remark we have $p \geq 2^k$. 
On the other hand, since $h$ is unimodal, each of the roots of $P_h(t)$ has modulus $\geq \frac{1}{2}$, thus each root of $P_h(t^p)$ has modulus at least $\frac{1}{2^{1/p}} \geq \frac{1}{2^{1/2^k}} > e^{-h(g)} = r$. This means that $r$ is not a root of $P_h(t^p)$, and by the above discussion $r$ is 
a simple root of $P_g(t)$. This proves that $r$ is a simple root of $P_f(t)$. 
%This means by the above discussion that $r$ is a simple root of $\widetilde{P}_g(t)$ and not a root of $P_h(t^p)$, hence by the above factorisation it is a simple root of $P_f(t)$.
\end{enumerate}
\end{proof}

%\begin{lemma}
%Let $g$ be a tent map of slope $s > 1$. Then $\lambda = 1/s$ is a simple root of $P_g(t)$.
%\end{lemma}

%\begin{proof}
%Note that $g^n$ is also piecewise linear with derivative $= \pm s^n$ at all points except finitely many. Then 
%$\textup{Var}(g^n) = s^n (b-a)$.
%Then 
%$$V(t) = \sum_{n = 0}^\infty \textup{Var}(g^n) t^n = \frac{b-a}{1-s t}$$
%Thus $t = \frac{1}{s}$ is a simple pole of $V(t)$. Thus $t = \frac{1}{s}$ is a simple root of $P(t)$.
%\end{proof}

\begin{lemma} \label{L:deriv-bound}
Let $\theta$ be a real angle with $h(\theta) >0$, and let $r = e^{-h(\theta)}$. Then there exists $\epsilon > 0$ such that for any $\theta'$ with $|\theta- \theta'| < \epsilon$ the 
kneading series $P_{\theta'}(t)$ has exactly one root (counted with multiplicity) inside the disk $\{z \in \mathbb{C} \ : |z-r|  < \epsilon\}$.
Moreover, there exists $c > 0$ such that 
$$|P'_{\theta'}(t)| \geq c$$
for all $|\theta' - \theta| < \epsilon$ and $|t-r| < \epsilon$.
\end{lemma}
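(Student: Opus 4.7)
The plan is to combine Theorem \ref{T:simple} (simplicity of the root $r$) with a continuity argument for the kneading series in $\theta$, and then apply Rouch\'e's theorem on a small disk centered at $r$.

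First, since $h(\theta) > 0$ we have $r = e^{-h(\theta)} < 1$, and by Theorem \ref{T:simple} the point $r$ is a simple root of $P_\theta(t)$. In particular $P_\theta'(r) \neq 0$. Because $P_\theta$ is holomorphic on the open unit disk, $P_\theta'$ is continuous, hence there exists $\epsilon_0 > 0$ (with $r + \epsilon_0 < 1$) and a constant $c_0 > 0$ such that
\[
|P_\theta'(t)| \geq 2 c_0 \qquad \text{and} \qquad |P_\theta(t)| \geq \delta > 0
\]
for all $t$ with $|t - r| = \epsilon_0$, and $r$ is the only zero of $P_\theta$ in the closed disk $\{|t - r| \leq \epsilon_0\}$.

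The second step is to quantify the continuity of the map $\theta' \mapsto P_{\theta'}$ in terms of the binary expansions. Fixing the convention that binary expansions do not terminate in all $1$s, if $|\theta - \theta'| < 2^{-N-2}$ then the binary digits $\theta_k$ and $\theta'_k$ coincide for $k \leq N$, so the coefficients $\epsilon_k = (-1)^{\theta_k}$ of the kneading series agree up to order $N$. Consequently
\[
|P_\theta(t) - P_{\theta'}(t)| \leq \sum_{k > N} 2 |t|^k \leq \frac{2 (r + \epsilon_0)^{N+1}}{1 - r - \epsilon_0},
\]
and a similar estimate holds for the derivatives with an extra factor of $k$, so both $P_{\theta'} \to P_\theta$ and $P_{\theta'}' \to P_\theta'$ uniformly on the closed disk $\{|t - r| \leq \epsilon_0\}$ as $\theta' \to \theta$.

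The third step is to conclude. Choose $\epsilon \in (0, \epsilon_0]$ small enough that when $|\theta - \theta'| < \epsilon$ one has $|P_{\theta'}(t) - P_\theta(t)| < \delta$ on the circle $|t - r| = \epsilon_0$ and $|P_{\theta'}'(t) - P_\theta'(t)| < c_0$ on the full closed disk. Rouch\'e's theorem then gives that $P_{\theta'}$ has the same number of zeros as $P_\theta$ in $\{|t - r| < \epsilon_0\}$, namely one (counted with multiplicity). The derivative bound $|P_{\theta'}'(t)| \geq |P_\theta'(t)| - c_0 \geq c_0 =: c$ then holds uniformly on the same disk, and shrinking $\epsilon$ further if needed we obtain the statement for all $|t - r| < \epsilon$.

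The main technical point is the uniform convergence of the coefficients, which reduces to the elementary observation that closeness of angles implies agreement of binary digits up to a high order; the rest is a standard application of Rouch\'e's theorem made possible by the simplicity of the root $r$ established in Theorem \ref{T:simple}.
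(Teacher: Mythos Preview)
Your approach is essentially the paper's: invoke the simplicity of $r$ from Theorem~\ref{T:simple}, show $P_{\theta'}\to P_\theta$ and $P'_{\theta'}\to P'_\theta$ uniformly on a disk about $r$, and conclude by Rouch\'e. Two small points are worth noting. First, the explicit bound ``$|\theta-\theta'|<2^{-N-2}$ forces the first $N$ digits to agree'' is not literally true for arbitrary $\theta$ (think of $\theta$ very close to a dyadic rational); what is true, and sufficient here, is the qualitative statement that for the \emph{fixed} non-dyadic $\theta$ in question the digits stabilize as $\theta'\to\theta$, so uniform convergence on compacta holds. Second, the paper is more cautious at purely periodic $\theta$: it records that the one-sided limit of $P_{\theta'}$ as $\theta'\to\theta^+$ may be $\widehat P_\theta(t)=P_\theta(t)\tfrac{1-t^p}{1+t^p}$ rather than $P_\theta$, and checks separately that $\widehat P_\theta$ also has a simple zero at $r$ (since $\tfrac{1-t^p}{1+t^p}$ is zero-free in the disk). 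If one defines $P_{\theta'}$ for all $\theta'$ directly from the binary digits of $\theta'$, your argument already covers this case and the split is unnecessary; the paper's extra care reflects the kneading-theoretic viewpoint, where the two one-sided kneading conventions at a periodic parameter can differ.
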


\begin{proof}
There are two cases. If $\theta$ is not purely periodic for the doubling map, then for $\theta'$ sufficiently close to $\theta$ the coefficients of $P_{\theta'}(t)$ 
eventually stabilize to the coefficients of $P_\theta(t)$. Hence, $P_{\theta'}(t)$ converges to $P_{\theta}(t)$ uniformly on compact subsets of the unit disk.
Moreover, Theorem \ref{T:simple} implies that $r$ is the only root of $P_\theta(t)$ in a neighborhood of $z = r$, counting with multiplicities, so the 
first claim follows by Rouch\'e's theorem. On the other hand, if $\theta$ is purely periodic of period $p$, then there are two possible limits of the power series $P_{\theta'}(t)$ 
as $\theta' \to \theta$. Indeed, one checks that 
$$\lim_{\theta' \to \theta^-} P_{\theta'}(t) = P_\theta(t)$$
while 
$$\lim_{\theta' \to \theta^+} P_{\theta'}(t) = \widehat{P}_\theta(t) = P_\theta(t) \frac{1-t^p}{1+t^p}.$$
Since the function $\varphi(t) = \frac{1-t^p}{1+t^p}$ is never vanishing inside the unit disk, then the claim follows by Rouch\'e's theorem as before.
To prove the second claim, note that we just proved that $P'_{\theta}(r) \neq 0$, and $\widehat{P}'_\theta(r) \neq 0$, so the claim follows by noting that the derivative $P'_{\theta'}(t)$ also converges uniformly on compact sets to either $P'_{\theta}(t)$ or $\widehat{P}'_{\theta}(t)$.

\end{proof}

\section{The local H\"older exponent: the upper bound}

\begin{proposition}\label{P:upper}
For each $\theta \in \mathcal{R}$ with $h(\theta) > 0$, there exists $C = C(\theta) > 0$ such that the modulus of continuity of the entropy is bounded by   
$$|h(\theta) - h(\theta')| \leq C |\theta - \theta'|^{\frac{h(\theta)}{\log 2}}$$
for each $\theta, \theta' \in \mathcal{R}$ with $\theta' < \theta \leq 1/2$. 
\end{proposition}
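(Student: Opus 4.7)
The plan is to read off the entropy as $h(\theta) = -\log r(\theta)$, where $r(\theta)$ is the smallest root of the kneading series $P_\theta(t)$, and to compare the roots of $P_\theta$ and $P_{\theta'}$ by looking at the difference of power series with long common initial segments. Set $r := e^{-h(\theta)}$ and let $N$ be chosen so that $2^{-(N+2)} \le |\theta - \theta'| < 2^{-(N+1)}$. Since $\theta = \sum_k \theta_k 2^{-(k+1)}$, the first $N$ binary digits of $2\theta$ and $2\theta'$ agree (modulo harmless dyadic exceptions), so $\theta_k = \theta'_k$ and hence $\epsilon_k = \epsilon'_k$ for all $k \le N$. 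Therefore
\[
P_{\theta'}(r) - P_\theta(r) = \sum_{k > N}(\epsilon'_k - \epsilon_k)\,r^k,
\]
which immediately gives $|P_{\theta'}(r)| = |P_{\theta'}(r) - P_\theta(r)| \le \frac{2 r^{N+1}}{1-r}$. Since $r^N = 2^{-N h(\theta)/\log 2}$ and $2^{-N} \asymp |\theta - \theta'|$, this reads
\[
|P_{\theta'}(r)| \le C_1 |\theta - \theta'|^{h(\theta)/\log 2}
\]
for a constant $C_1 = C_1(\theta)$.

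To convert this into a bound on the root $r' := e^{-h(\theta')}$ of $P_{\theta'}$, I invoke Lemma~\ref{L:deriv-bound}: in some neighborhood $|t - r| < \epsilon$ the derivative $|P'_{\theta'}(t)|$ is bounded below by $c > 0$ uniformly in $\theta'$ near $\theta$, and $r'$ is the unique root of $P_{\theta'}$ in this neighborhood (by Rouché). Continuity of entropy (Milnor--Thurston) ensures that $|r - r'| < \epsilon$ once $|\theta - \theta'|$ is small enough, so integrating the derivative along the real segment $[r', r]$ yields
\[
|P_{\theta'}(r)| = |P_{\theta'}(r) - P_{\theta'}(r')| \ge c\,|r - r'|,
\]
and hence $|r - r'| \le c^{-1} C_1 |\theta - \theta'|^{h(\theta)/\log 2}$. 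Since $r, r'$ are bounded below by some $r_0 > 0$ on a small neighborhood of $\theta$, the mean value theorem applied to $\log$ gives $|h(\theta) - h(\theta')| = |\log r' - \log r| \le r_0^{-1} |r - r'|$, which is the claimed inequality.

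The main subtlety, and what Lemma~\ref{L:deriv-bound} is set up to handle, is the case when $\theta$ is purely periodic for the doubling map: then the coefficients of $P_{\theta'}$ do not converge to those of $P_\theta$ from the right, but rather to $\widehat{P}_\theta(t) = P_\theta(t)\,(1-t^p)/(1+t^p)$, which however still has $r$ as a simple root with nonzero derivative by Theorem~\ref{T:simple} and the fact that $(1-t^p)/(1+t^p)$ is nonvanishing in the unit disk. A secondary point to verify is that the root $r'$ produced by Rouché really is the one giving the entropy, i.e.\ the smallest real root of $P_{\theta'}$; this follows from monotonicity of entropy ($\theta' < \theta$ forces $r' \ge r$) combined with the uniqueness of a root of $P_{\theta'}$ in a small disk around $r$, so $r'$ cannot have jumped to a value far from $r$.
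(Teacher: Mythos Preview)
Your argument is correct and follows essentially the same route as the paper: estimate $|P_{\theta'}(r)|$ by the tail of the kneading series using the common initial block of digits, then invoke Lemma~\ref{L:deriv-bound} and the mean value theorem to convert this into a bound on $|r-r'|$, and finally pass to $|h(\theta)-h(\theta')|$ via the Lipschitz property of $\log$. One point to tighten: the implication ``$|\theta-\theta'|<2^{-(N+1)}$ forces the first $N$ binary digits to agree'' is \emph{not} a general fact about binary expansions (carry propagation can make all digits differ), and the issue is not merely dyadic endpoints; what actually makes it work here is the membership $\theta,\theta'\in\mathcal{R}$ via Lemma~\ref{L:dist}, which gives $n\ge N - O_\theta(1)$ for the first disagreeing digit, and the $O_\theta(1)$ is harmlessly absorbed into $C(\theta)$.
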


%Recall that $\theta_0$ is the angle of the Feigenbaum parameter, i.e. the largest value of $\theta$ for which $h(\theta) = 0$.

\begin{proof}
Let $\theta, \theta'$ be two real angles, and let us assume $\theta' < \theta \leq \frac{1}{2}$.  Let their binary expansions be 
%parameters, and consider the kneading series $P_{\theta_1}(t)$ and $P_{\theta_2}(t)$. 
%Recall that 
%$$P_{\theta}(r) = 0$$
%Note that $P_\theta(r) = P_{\theta_1}(r_1) = 0$. 
%Consider two angles $\theta, \theta_1$, with binary expansions 
$\theta = \sum_{k = 1} \theta_k 2^{-k}$ and $\theta' = \sum_{k = 1} \theta'_k 2^{-k}$ and let $n := \min \{ k \ : \ \theta_k \neq \theta_k' \}$.
Then by Lemma \ref{L:dist} one gets 
$$c 2^{-n} \leq |\theta - \theta'| \leq 2^{-n+1}.$$
Let us now compare the two kneading series $P_\theta(t)$ and $P_{\theta'}(t)$. As the first $n-1$ coefficients of the two series coincide, one gets 
\begin{equation}\label{E:eq1}
P_{\theta}(t) -  P_{\theta'}(t) = 2 t^n + \sum_{k = n+1}^\infty (\epsilon_k - \epsilon'_k) t^k = t^n h(t)
\end{equation}
where $h(t) = 2 + \sum_{k = 1}^\infty (\epsilon_{n+k} - \epsilon'_{n+k}) t^k$. On the other hand, as $P_\theta(r) = P_{\theta'}(r') = 0$, one has
\begin{equation} \label{E:eq2}
P_{\theta}(r) - P_{\theta'}(r) = P_{\theta'}(r') - P_{\theta'}(r) = P'_{\theta'}(\xi)(r' - r)
\end{equation}
with $\xi \in [r, r']$. Thus, combining the two previous equations we get 
\begin{equation}\label{E:main}
r' - r = r^n \frac{h(r)}{P'_{\theta'}(\xi)}.
\end{equation}
% \frac{P_{\theta}(t_0) - P_{\theta}(t_1)}{P'_{\theta}(\xi)} = \frac{ 2 t^n + \sum (\epsilon_k - \epsilon'_k) t^k }{P'_{\theta_0}(\xi)}$$

In order to get the upper bound, let us note that $|h(r)| \leq \frac{2}{1-r}$ as the coefficients of $h(t)$ are bounded in absolute value by $2$.
Moreover, by Lemma \ref{L:deriv-bound} we have
$$\inf_{\stackrel{|\theta - \theta'| < \epsilon}{|\xi - \theta| < \epsilon}} |P'_{\theta'}(\xi)| = c_1 > 0.$$
Finally, by Lemma \ref{L:dist} one gets 
$$n \geq \frac{\log c - \log |\theta - \theta'|}{\log 2}$$
hence 
\begin{equation}
r^n = e^{n \log r} \leq c_2 |\theta - \theta'|^{\frac{- \log r}{\log 2}}
\end{equation}
where  $c_2 = e^{\frac{\log r \log c}{\log 2}}$. Thus, putting together the previous estimates 
\begin{equation}
r' - r \leq \frac{c_2}{c_1(1-r)}  |\theta - \theta'|^{\frac{- \log r}{\log 2}}
\end{equation}
which using the definition $h(\theta) = - \log r$  yields the upper bound 
$$r' - r \leq C  |\theta - \theta'|^{\frac{h(\theta)}{\log 2}}$$
where we set $C = \frac{c_2}{c_1(1-r)}$. The claim then follows as $h(\theta) = - \log r$ and the function $x \mapsto \log x$ is 
differentiable with bounded derivative (hence Lipschitz) on the interval $[1, 2]$.

%In order to prove the lower bound, let $\theta$ be purely periodic of period $p$. Then $\theta$ is represented by an extremal word $S$. By Lemma \ref{}, 
%there exists a dominant word $T$ such that $S << T$. Then, by Lemma \ref{} the word 
%$$S^m T^n$$ 
%is extremal for all $m, n \geq 1$. Let $p = |S|$ and $q = |T|$, and $P = pq$. Then consider the word 
%$$S_m := S^{mq} T^p$$
%and let $\theta_m$ be its associated binary word. 

%Then for any $m \geq 1$ one gets the identity 
%$$P_\theta(t) - P_{\theta_m}(t) = \frac{g(t)t^{P m}}{1 - t^{P(m+1)}}$$
%where $g(t) = \sum_{k = P +1}^{2 P} (\epsilon_k - \epsilon'_k) t^k$.
%Hence, by combining with equation \ref{}
%$$r_m - r = \frac{r^{Pm} g(r)}{(1 - r^{P(m+1)}) P_{\theta_m}'(\xi_m)}$$
%with $\xi_m \in [r, r_1]$.
%Now, note that 
%$1 - r^{P(m+1)} \leq 1$ and $|P'_{\theta_m}(\xi_m)| \leq C$, hence 
%$$| r_m - r | \geq \frac{r^{Pm} g(r)}{C}$$
%and $\theta - \theta_m \leq 2^{-m P}$
\end{proof}

\begin{lemma} \label{L:dist}
Let $\theta, \theta' \in \mathcal{R}$ with $0 < \theta' < \theta \leq \frac{1}{2}$, and $n = \min \{ k \ : \ \theta_k \neq \theta_k' \}$. Then 
$$c 2^{-n} \leq |\theta - \theta'| \leq 2^{-n+1}$$
where $c = 2(1 - 2 \theta)$ if $\theta < 1/2$, and $c = 1$ if $\theta = 1/2$. 
\end{lemma}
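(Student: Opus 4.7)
The plan is to express $\theta - \theta'$ as $2^{-n}$ times a correction involving the iterates $D^n(\theta)$ and $D^n(\theta')$, and then bound the correction using the dynamical characterization $\mathcal{R} = \{x : D^k(x) \notin (x,1-x)\text{ for all }k \geq 0\}$ recalled from \cite{Do}.

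Since $\theta' < \theta$ and their binary expansions agree through index $n-1$, the first differing digits must be $\theta_n = 1$ and $\theta_n' = 0$. Peeling off the $n$-th digit and reindexing the tails gives the identity
\[
\theta - \theta' = 2^{-n}\bigl(1 + D^n(\theta) - D^n(\theta')\bigr).
\]
Since $D^n(\theta), D^n(\theta') \in [0,1]$, the upper bound $|\theta - \theta'| \leq 2^{-n+1}$ is then immediate.

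For the lower bound, I would first assume $\theta < 1/2$. The digit $\theta_n = 1$ places $D^{n-1}(\theta) = .\theta_n\theta_{n+1}\ldots$ in $[1/2, 1)$, hence strictly above $\theta$, so the constraint $D^{n-1}(\theta) \notin (\theta, 1-\theta)$ forces $D^{n-1}(\theta) \geq 1 - \theta$, giving $D^n(\theta) = 2D^{n-1}(\theta) - 1 \geq 1 - 2\theta$. Symmetrically, $\theta_n' = 0$ places $D^{n-1}(\theta')$ in $[0, 1/2)$, which lies below $1 - \theta'$, and the analogous constraint for $\theta'$ forces $D^{n-1}(\theta') \leq \theta'$, hence $D^n(\theta') \leq 2\theta' \leq 2\theta$. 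Plugging into the identity (and using $\theta' \leq \theta$) yields
\[
\theta - \theta' \geq 2^{-n}\bigl(1 + (1-2\theta) - 2\theta\bigr) = 2(1-2\theta)\cdot 2^{-n}.
\]

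The one subtlety I anticipate is the boundary case $\theta = 1/2$, where the constant $2(1-2\theta)$ degenerates. I would handle this by choosing the non-terminating binary expansion $\theta = .0\overline{1}$, so that $\theta_k = 1$ for every $k \geq 2$. Then every term in the tail difference $\sum_{k > n}(\theta_k - \theta_k')2^{-k}$ is non-negative, and the identity reads $\theta - \theta' = 2^{-n}(2 - D^n(\theta')) \geq 2^{-n}$, giving $c = 1$ as claimed. Aside from this convention issue, the argument is a direct application of the defining property of $\mathcal{R}$ and involves no genuine obstacle.
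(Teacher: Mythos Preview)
Your proof is correct and follows essentially the same route as the paper: both arguments use the characterization of $\mathcal{R}$ to force $D^{n-1}(\theta) \geq 1-\theta$ and $D^{n-1}(\theta') \leq \theta'$, and then read off the bound. The only cosmetic difference is that you package the computation via the identity $\theta - \theta' = 2^{-n}\bigl(1 + D^n(\theta) - D^n(\theta')\bigr)$, while the paper uses the equivalent relation $2^{n-1}(\theta - \theta') = D^{n-1}(\theta) - D^{n-1}(\theta')$ directly.
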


\begin{proof}
Let us first consider the case $\theta < 1/2$. Recall that the set $\mathcal{R}$ is characterized as 
$$\mathcal{R} = \{ \theta \in [0, 1/2] \ : \ D^n(\theta) \notin (\theta, 1 - \theta) \quad  \forall n \geq 0\}.$$
Now, by definition of $n$ one has
$D^{n-1}(\theta') < \frac{1}{2} < D^{n-1}(\theta)$
hence by the definition of $\mathcal{R}$
$$D^{n-1}(\theta') \leq \theta' < \theta < \frac{1}{2} < 1 - \theta \leq D^{n-1}(\theta).$$
This implies
$$2^{n-1}(\theta - \theta') = D^{n-1}(\theta) - D^{n-1}(\theta') \geq 1 - 2 \theta$$
which yields the lower bound. 

If $\theta = 1/2$, then $D^{n-1}(\theta) = 1$, hence $D^{n-1}(\theta) - D^{n-1}(\theta') \geq 1/2$, and 
the proof proceeds as before.

The upper bound follows simply because 
$$2^{n-1}(\theta - \theta') = D^{n-1}(\theta) - D^{n-1}(\theta') \leq 1.$$
\end{proof}

\section{Primitive angles}

In order to prove the lower bound for the local H\"older exponent we need the following definition.

\begin{definition}
An angle $\theta \in \mathcal{R}$ is called \emph{primitive} if it is purely periodic for the doubling map, and moreover such that $D^k(\theta) \neq 1 - \theta$ for all $k \geq 0$.
\end{definition}

A purely periodic, real angle which is not primitive will be called \emph{satellite}. The external rays corresponding to these parameters 
land at roots of satellite components of the Mandelbrot set. 

\begin{lemma} \label{L:satellite}
If $\theta \in \mathcal{R}$ is satellite, then the entropy is locally constant at $\theta$. 
\end{lemma}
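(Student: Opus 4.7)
The plan is to show that every satellite angle $\theta\in\mathcal{R}$ with $0<\theta<1/2$ is the period doubling of some $\theta_0\in\mathcal{R}^0$. Granted this, Lemma~\ref{L:pd} combined with the description of the components of $[0,1/2]\setminus\mathcal{R}$ gives that $h$ is constantly equal to $h(\theta)$ on the component $(\theta_0,\theta)$ to the left, while Douady's lemma gives the same value on the small copy $I(\theta)=(\theta,\overline{\theta})$ to the right; gluing the two sides yields local constancy of $h$ at $\theta$.

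I would begin by letting $p$ denote the minimal period of $\theta$ and $k\ge 0$ the smallest integer with $D^k(\theta)=1-\theta$. Because the orbit of $\theta$ avoids $(\theta,1-\theta)\ni 1/2$, the involution $x\mapsto 1-x$ commutes with $D$ along the orbit, so applying $D^k$ a second time yields $D^{2k}(\theta)=\theta$, whence $p\mid 2k$. Minimality of $k$ forces $k=p/2$ (so $p$ is even), assuming $\theta\ne 1/2$. Writing the expansion $\theta=.\overline{s_1\dots s_p}$, the equation $D^{p/2}(\theta)=1-\theta$ translates into $s_{i+p/2}=\check{s}_i$ for $1\le i\le p/2$, so $\theta=\mathrm{pd}(\theta_0)$ where $\theta_0:=.\overline{s_1\dots s_{p/2}}$.

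To verify that $\theta_0\in\mathcal{R}$, I would set $q:=2^{-p/2}$ and work with the increasing affine map
$$\Phi(x) := \frac{x(1-q)+q}{1+q}.$$
A direct computation using $\theta_0=(.s_1\dots s_{p/2})/(1-q)$ and the analogous identity for $\theta$ shows that $\Phi(\theta_0)=\theta$, $\Phi(1-\theta_0)=1-\theta$, and $\Phi(D^m(\theta_0))=D^m(\theta)$ for each $m=0,\dots,p/2-1$. Since $\Phi$ is an increasing bijection sending $(\theta_0,1-\theta_0)$ onto $(\theta,1-\theta)$, the condition $D^m(\theta_0)\in(\theta_0,1-\theta_0)$ is equivalent to $D^m(\theta)\in(\theta,1-\theta)$; the latter never holds because $\theta\in\mathcal{R}$, and periodicity of $\theta_0$ disposes of the remaining indices. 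Hence $\theta_0\in\mathcal{R}^0$ and $\theta=\mathrm{pd}(\theta_0)$.

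Putting the pieces together, Lemma~\ref{L:pd} gives $h(\theta_0)=h(\theta)$, and by the extension convention $h\equiv h(\theta)$ on the complementary component $(\theta_0,\theta)$. If $h(\theta)>0$, Douady's lemma gives $h\equiv h(\theta)$ on $I(\theta)=(\theta,\overline{\theta})$, so $h$ is constantly equal to $h(\theta)$ on the full neighborhood $(\theta_0,\overline{\theta})$ of $\theta$. If instead $h(\theta)=0$, then $\theta<\theta_\star$ (since $\theta_\star$ is not periodic and hence not satellite), so $h\equiv 0$ on a neighborhood of $\theta$. The main obstacle is the combinatorial identification of satellite angles with period doublings via the conjugacy $\Phi$; once that is in hand, everything else follows directly from the structural results recalled in Section~\ref{S:back} and from Lemma~\ref{L:pd}.
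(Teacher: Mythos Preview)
Your overall route is exactly the paper's: recognise that a satellite angle is a period doubling and invoke Lemma~\ref{L:pd}. The paper's proof is in fact terser than yours---it simply asserts that $D^k(\theta)=1-\theta$ forces $\theta=.\overline{s_1\dots s_k\check s_1\dots\check s_k}$ and then quotes Lemma~\ref{L:pd}, without justifying $k=p/2$, without checking that $\theta_0\in\mathcal{R}$, and without spelling out why $h(\theta_0)=h(\theta)$ upgrades to local constancy. Your treatment of the first and third of these points is correct and a genuine improvement.

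The gap is in your verification that $\theta_0\in\mathcal{R}$. The affine map $\Phi$ you introduce does send $\theta_0\mapsto\theta$ and $1-\theta_0\mapsto 1-\theta$, but it does \emph{not} satisfy $\Phi(D^m(\theta_0))=D^m(\theta)$ for $m\ge 1$: an affine map cannot conjugate the doubling map to itself unless it is the identity. Concretely, take $\theta_0=.\overline{01}=1/3$, $\theta=.\overline{0110}=2/5$, so $q=1/4$; then $D(\theta_0)=2/3$ and $\Phi(2/3)=3/5$, whereas $D(\theta)=4/5$. Hence the equivalence ``$D^m(\theta_0)\in(\theta_0,1-\theta_0)\Leftrightarrow D^m(\theta)\in(\theta,1-\theta)$'' does not follow from your construction, and the argument for $\theta_0\in\mathcal{R}$ collapses.

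You can bypass this step entirely using the structural description already quoted in Section~\ref{S:back}: the right endpoints of the complementary components of $\mathcal{R}$ are exactly the angles $\mathrm{pd}(\eta)$ with $\eta\in\mathcal{R}^0$, and your computation shows every satellite $\theta$ has this form; since the assignment $\eta\mapsto\mathrm{pd}(\eta)$ is injective on $\mathcal{R}^0$ (distinct components have distinct right endpoints), $\theta_0$ is forced to lie in $\mathcal{R}^0$. Alternatively, note that the kneading-series identity in the proof of Lemma~\ref{L:pd} is a formal power-series equality that does not require $\theta_0\in\mathcal{R}$, so $h(\theta_0)=h(\theta)$ holds regardless, and the left-hand interval on which $h$ is constant can be taken to be the complementary component of $[0,1/2]\setminus\mathcal{R}$ immediately to the left of $\theta$ (which exists because $\theta$ is periodic and hence isolated in $\mathcal{R}$ from the left by that same structural description).
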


\begin{proof}
Indeed, if $D^k(\theta) = 1-\theta$ then the binary expansion of $\theta$ is of the form 
$$\theta = .\overline{ s_1 \dots s_k \check{s}_1 \dots \check{s}_k}$$
where $\check{s}_i := 1 - s_i$. This means that $\theta$ is the period doubling of the angle $\theta' = .\overline{s_1 \dots s_k}$, thus $h(\theta) = h(\theta')$ by Lemma \ref{L:pd}.
\end{proof}

The reason we introduce this definition is because we need it to prove that primitive angles can be approximated by real angles with controlled combinatorics.

\begin{lemma}
Let $\theta \in \mathcal{R} \cap (0, \frac{1}{2})$ be a primitive angle with $D^p(\theta) = \theta$.
% and $D^k(\theta) \neq 1  - \theta$ for all $k$. 
Pick $\delta > 0$ such that $D^k(\theta) \notin [\theta, 1 - \theta  + 2\delta]$ for $0 < k < p$, and let $\theta' \in \mathcal{R}$ be a purely periodic angle with 
$\theta-\delta < \theta' < \theta$ and $D^q(\theta') = \theta'$. Let 
$$\theta = \sum_{k = 1}^\infty s_k 2^{-k} \qquad \textup{and }\qquad  \theta' = \sum_{k = 1}^\infty t_k 2^{-k}$$
%.\overline{s_1 \dots s_p}$ and $\theta' = .\overline{t_1 \dots t_q}$ 
be the binary expansions of $\theta$ and $\theta'$, respectively. Then the point of binary expansion
$$\xi := .\overline{s_1 \dots s_p t_1 \dots t_q}$$
belongs to $\mathcal{R}$.
%Let $\theta = .\overline{s}$ and $\theta' = .\overline{t}$.
%Then there exists $\theta' < \theta$ such that for all $m, n$, the angle 
%$$\theta_{m,n} := .\overline{s^m t^n}$$
%belongs to $\mathcal{R}$.
\end{lemma}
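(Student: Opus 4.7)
My plan is to verify the standard characterization $\xi \in [0,1/2]$ and $D^n(\xi) \notin (\xi, 1-\xi)$ for all $n \ge 0$. Since $\xi$ is purely periodic under $D$ with period dividing $p+q$, only the finitely many orbit points $\xi, D(\xi), \dots, D^{p+q-1}(\xi)$ need to be tested, and the case $n=0$ is trivial.

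The essential preliminary is an explicit expression for $\xi - \theta$. Because $\xi$ and $\theta$ share their first $p$ binary digits, $\xi - \theta = 2^{-p}(D^p(\xi) - \theta)$; because $D^p(\xi) = .\overline{t_1 \dots t_q s_1 \dots s_p}$ and $\theta'$ share their first $q$ digits, $D^p(\xi) - \theta' = 2^{-q}(\xi - \theta')$. Solving these two linear equations yields
$$\xi - \theta \;=\; \frac{(1 - 2^{-q})(\theta' - \theta)}{2^p - 2^{-q}},$$
from which $\theta' < \xi < \theta < 1/2$ and the sharp bound $|\xi - \theta| < \delta/(2^p - 1)$.

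For $0 < k < p$, the fact that $D^k(\xi)$ and $D^k(\theta)$ agree in their first $p-k$ digits prevents any mod-$1$ reduction along the iteration and yields the clean identity $D^k(\xi) = D^k(\theta) + 2^k(\xi - \theta)$. The hypothesis on $\delta$ splits into two sub-cases. If $D^k(\theta) < \theta$, then
$$D^k(\xi) - \xi \;=\; [D^k(\theta) - \theta] + (2^k - 1)(\xi - \theta)$$
has both summands nonpositive (the second because $\xi < \theta$), so $D^k(\xi) \le \xi$. If $D^k(\theta) > 1 - \theta + 2\delta$, then
$$D^k(\xi) - (1 - \xi) \;=\; [D^k(\theta) - (1 - \theta)] + (2^k + 1)(\xi - \theta)$$
combines a term exceeding $2\delta$ with a negative correction of magnitude at most $(2^k + 1)\delta/(2^p - 1) \le \delta$ for $k \le p-1$, so $D^k(\xi) \ge 1 - \xi$.

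For $p \le k < p + q$, write $k = p + j$ with $0 \le j < q$; the analogous linearization gives $D^{p+j}(\xi) = D^j(\theta') + 2^{j - q}(\xi - \theta')$. Since $\theta' \in \mathcal{R}$, either $D^j(\theta') \le \theta'$, whence $D^{p+j}(\xi) \le \theta' + (\xi - \theta')/2 < \xi$, or $D^j(\theta') \ge 1 - \theta'$, whence $D^{p+j}(\xi) > D^j(\theta') \ge 1 - \theta' > 1 - \xi$. The main obstacle is the quantitative estimate in the second sub-case of the first range: the naive bound $|\xi - \theta| \le 2^{-p}$ from digit agreement alone is too weak to beat the $2\delta$ gap, and one genuinely needs the explicit formula for $\xi - \theta$ derived above.
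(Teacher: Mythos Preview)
Your proof is correct and follows essentially the same route as the paper's: both verify the criterion $D^k(\xi)\notin(\xi,1-\xi)$ by splitting the orbit into the $s$-block ($0<k<p$) and the $t$-block ($p\le k<p+q$), and in each block compare $D^k(\xi)$ to the corresponding iterate of $\theta$ or $\theta'$ via the linear action of $D$ on points sharing initial digits. The only cosmetic difference is that you derive the closed formula $\xi-\theta=\dfrac{(1-2^{-q})(\theta'-\theta)}{2^{\,p}-2^{-q}}$ and use the resulting bound $(2^k+1)|\xi-\theta|\le\delta$, whereas the paper phrases the same estimate dynamically as $D^k(\theta)-D^k(\xi)\le D^p(\theta)-D^p(\xi)\le\theta-\theta'<\delta$; these are equivalent, and the rest of the case analysis coincides.
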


\begin{proof}
Since the map $D^{p}$ is uniformly expanding, if we let $x = .s\overline{t} \in [\theta', \theta]$, then $D^{p}$ is a homeomorphism 
between $[x, \theta]$ and $[\theta', \theta]$. Similarly, the point $y = .t \overline{s} \in [\theta', \theta]$ is so that $D^{q}$ is a homeomorphism between $[\theta', y]$ and $[\theta', \theta]$. 
%Let us pick now $z \in [x, \theta]$ such that $D^{p}(z) = y$, so that $D^{p}([x, z]) = [\theta', y]$ and $D^{p+q}([x,z]) = [\theta', \theta]$. 
We have that $\xi \in [x, \theta]$ and $D^p(\xi) \in [\theta', y]$, with $D^{p+q}(\xi) = \xi$. 
We now check that $\xi$ belongs to $\mathcal{R}$. 
%First of all, the map $D^{mp}$ is an expanding homeomorphism on $[x, \theta]$ and fixes $\theta$
%$$0 \leq D^k(\theta) - D^k(\theta_{m,n}) \leq \theta - \theta'\qquad \textup{for all }0 \leq k \leq mp$$
%Moreover, $D^{nq} : [\theta', y] \to [\theta', \theta]$ is a homeomorphism and $D^{mp}(\theta_{m,n})$ belongs to $[\theta', y]$, then 
%$$0 \leq D^{mp + k}(\theta_{m, n}) - D^{mp + k}(\theta') \leq \theta - \theta' \qquad \textup{for all }0 \leq k \leq nq$$
%Now, let us choose $\delta > 0$ such that for each $k$
%$$D^k(\theta) \in [0, \theta] \cup [1-\theta+\delta, 1]$$
%and pick $\theta' \in \mathcal{R} \cap [\theta, \theta- \delta]$.
%Then, if $D^k(\theta) \leq \theta$, then 
%$$D^k(\theta) - D^k(\theta_{m,n}) \geq \theta - \theta_{m,n}$$
%hence if $D^k(\theta) \leq \theta$ then 
%$$D^k(\theta_{m,n}) \leq \theta_{m,n} + D^k(\theta) - \theta \leq \theta_{m,n}$$
%on the other hand, when $D^k(\theta) \geq 1 - \theta + \delta$ then 
%$$D^k(\theta_{m,n}) \geq D^k(\theta) - \delta$$
%\end{proof}
%\begin{proof} 
In order to do so, we will check that for all iterates $0 < k < p+q$ the point $D^k(\xi)$ does not lie in the ``forbidden" interval $(\xi, 1 - \xi)$.
Let us first consider the earlier iterates, $D^k(\xi)$ with $0 < k \leq p$. Then as $D^k$ is expanding and orientation preserving on 
$[x, \theta]$ one gets the estimates
$$\theta - \xi \leq D^k(\theta) - D^k(\xi) \leq D^p(\theta) - D^p(\xi) \leq \theta - \theta'. $$
There are two cases: 
\begin{itemize}
\item 
If $D^k(\theta) < \frac{1}{2}$, then $D^k(\theta) \leq \theta$, hence 
$$D^k(\xi) \leq (D^k(\theta) - \theta) + \xi \leq \xi$$
\item
If $D^k(\theta) > \frac{1}{2}$, then by hypothesis $D^k(\theta) \geq 1- \theta + 2 \delta \geq 1- \theta + 2 (\theta - \theta')$, which implies
$$D^k(\xi) \geq D^k(\theta) - \theta + \theta' \geq 1 - \theta' \geq 1 - \xi$$
as required.
\end{itemize}
Let us consider now $D^{p+k}(\xi)$, with $0 < k < q$. Recall that by construction $D^{p+q}(\xi) = \xi$. Moreover, 
the map $D^k$ is expanding and orientation-preserving on $[\theta', y]$, thus 
$$0 \leq D^{p+k}(\xi) - D^k(\theta') \leq  D^{p+q}(\xi) - D^q(\theta') = \xi - \theta'.$$
Now, there are two cases: 
\begin{itemize}
\item Suppose $D^k(\theta') < \frac{1}{2}$. Then $D^k(\theta')\leq \theta'$ as $\theta'$ belongs to $\mathcal{R}$, 
hence one gets
%$$D^{k+p}(\xi) - D^k(\theta') \leq D^{p+q}(\xi) - D^q(\theta') = \xi - \theta'$$ 
%hence since $D^k(\theta')\leq \theta'$ as $\theta'$ belongs to $\mathcal{R}$ one gets
$$D^{p+k}(\xi) \leq \xi + (D^k(\theta') - \theta') \leq \xi$$
as required.
\item Suppose instead $D^k(\theta') > \frac{1}{2}$. Then $D^k(\theta') \geq 1 - \theta'$ hence we can write
$$D^{p+k}(\xi) \geq D^k(\theta') \geq 1 - \theta' \geq 1 - \xi.$$
\end{itemize}
In both cases, $D^{p+k}(\xi)$ belongs to $[\xi, 1 - \xi]$, hence $\xi$ belongs to $\mathcal{R}$.
\end{proof}

\begin{corollary} \label{C:many}
In the hypotheses of the previous lemma, for each $m$ the sequence of points 
$$\theta_m = .\overline{s^m t}$$
belongs to $\mathcal{R}$.
\end{corollary}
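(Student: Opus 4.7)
The plan is to prove the corollary by induction on $m$, iterating the previous lemma. The base case $m=1$ is exactly the conclusion of the previous lemma: $\theta_1 = .\overline{s t} \in \mathcal{R}$. For the inductive step, I re-apply the same lemma with the same primitive angle $\theta$, but with $\theta_{m-1}$ playing the role of $\theta'$. Its repeating block is $s^{m-1} t$ of length $(m-1)p + q$, so the resulting $\xi$ is exactly $.\overline{s \cdot s^{m-1} t} = \theta_m$, which then belongs to $\mathcal{R}$.

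To apply the lemma with $\theta_{m-1}$ as the new "$\theta'$", I need to verify its hypotheses: purely periodic (clear by construction), an element of $\mathcal{R}$ (the induction hypothesis), and the two-sided bound $\theta - \delta < \theta_{m-1} < \theta$. Here $\delta$ can be taken to be the same as originally chosen, since its definition depends only on $\theta$. The upper bound $\theta_{m-1} < \theta$ is immediate from comparing binary expansions: the first $(m-1)p$ bits of $\theta_{m-1}$ and $\theta$ agree, and beyond them the continuation $t_1 t_2 \ldots$ of $\theta_{m-1}$ is strictly less than the repetition $s_1 s_2 \ldots$ in $\theta$, because $\theta' < \theta$.

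The main point is the lower bound $\theta_{m-1} > \theta - \delta$, which I will obtain from a contraction estimate on how quickly the angles $\theta_m$ converge to $\theta$. Recall from the proof of the previous lemma that $\xi$ lies in the interval $[x, \theta]$ with $x = .s \overline{t}$. Using $D^p(\theta) = \theta$ and $D^p(x) = \theta'$, a direct computation gives
\[ \theta - x \ = \ \frac{\theta - \theta'}{2^p}, \]
and hence $\theta - \xi \leq (\theta - \theta')/2^p$. Iterating this inequality along the induction (with $\theta'$ replaced by $\theta_{m-1}$ and $\xi$ by $\theta_m$ at the $m$-th stage), one obtains
\[ \theta - \theta_m \ \leq \ \frac{\theta - \theta_{m-1}}{2^p} \ \leq \ \cdots \ \leq \ \frac{\theta - \theta'}{2^{mp}} \ < \ \frac{\delta}{2^{mp}}, \]
so $\theta_m$ remains safely inside $(\theta - \delta, \theta)$ at every stage and the previous lemma is indeed applicable at each step. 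Thus no additional combinatorial analysis is required beyond the base case; the hard work is entirely carried by the previous lemma, and the only genuine verification here is the geometric-series contraction above.
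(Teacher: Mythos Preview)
The paper states this corollary without proof, so your inductive strategy---reapplying the lemma with $\theta_{m-1}$ in the role of $\theta'$---is exactly the intended argument, and the contraction estimate $\theta - \theta_m \le (\theta-\theta_{m-1})/2^p$ coming from $\xi\in[x,\theta]$ is the right way to keep $\theta_{m-1}$ inside $(\theta-\delta,\theta)$ for all $m$.

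One small point deserves tightening. Your justification for the strict inequality $\theta_{m-1}<\theta$ (``the continuation $t_1 t_2\ldots$ is strictly less than $s_1 s_2\ldots$ because $\theta'<\theta$'') is not quite complete: the tail of $\theta_{m-1}$ after the first $(m-1)p$ digits is $.\overline{t\,s^{m-1}}$, not $.\overline{t}$, and comparing $.\overline{t\,s^{m-1}}$ with $.\overline{s}$ directly from $\theta'<\theta$ requires an extra argument when the first discrepancy between $\overline{t}$ and $\overline{s}$ occurs beyond position $q$. The cleanest fix is already in your hands: since you invoke $\xi\in[x,\theta]$ from the lemma's proof, you immediately get $\theta_{m}\le\theta$ at every step, and strict inequality follows because $\theta_m=\theta$ would force $\theta$ to have period dividing $\gcd(p,mp+q)=\gcd(p,q)$, which (tracing through the digits) gives $\theta'=\theta$, a contradiction. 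With that adjustment the induction closes without further combinatorics.
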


\begin{corollary} \label{C:approx}
If $\theta \in \mathcal{R} \cap (0, 1/2)$ is a purely periodic, primitive angle, then for any $\delta > 0$ there exists a purely periodic $\theta' \in \mathcal{R}$ with $\theta - \delta < \theta' < \theta$.
\end{corollary}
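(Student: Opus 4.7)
The plan is to deduce the corollary directly from Corollary~\ref{C:many} by letting the iteration parameter $m$ tend to infinity.

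Concretely, suppose one is given an initial purely periodic angle $\theta^* \in \mathcal{R}$ lying in the interval $(\theta - \delta_0, \theta)$, where $\delta_0 > 0$ is the constant of the previous lemma (so that $D^k(\theta) \notin [\theta, 1 - \theta + 2\delta_0]$ for $0 < k < p$), with binary period $t = t_1 \dots t_q$. Corollary~\ref{C:many} then produces purely periodic angles $\theta_m = .\overline{s^m t} \in \mathcal{R}$ of period $mp + q$ for every integer $m \ge 1$. Since $\theta_m$ agrees with $\theta = .\overline{s}$ on its first $mp$ binary digits, one has $|\theta - \theta_m| \le 2^{-mp+1}$, hence $\theta_m \to \theta$ as $m \to \infty$. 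Moreover, $\theta^* < \theta$ forces the infinite sequence $t_1 t_2 \dots$ to be lexicographically smaller than $s_1 s_2 \dots$, and comparing at position $mp + j$ (where $j$ is the first index with $t_j < s_j$) shows $\theta_m < \theta$. Given any prescribed $\delta > 0$, it then suffices to pick $m$ large enough that $2^{-mp+1} < \delta$ to exhibit the required purely periodic angle $\theta_m \in \mathcal{R} \cap (\theta - \delta, \theta)$.

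The main obstacle is the base case: supplying the initial $\theta^* \in \mathcal{R}$ inside the window $(\theta - \delta_0, \theta)$ to start the construction. The naive choice $\theta^* = 0$ works only when no iterate $D^k(\theta)$ with $0 < k < p$ lies in $(1-\theta, 1)$, which in general fails. To treat the general case I would invoke the structure of the complement, $[0, 1/2] \setminus \mathcal{R} = \bigcup_{\alpha \in \mathcal{R}^0} (\alpha, \textup{pd}(\alpha))$: since $\theta$ is primitive, $\theta \neq \textup{pd}(\alpha)$ for any $\alpha \in \mathcal{R}^0$, so $\theta$ is not the upper endpoint of any complementary gap. Consequently $\mathcal{R}$ accumulates on $\theta$ from the left, and among the purely periodic left endpoints of the complementary gaps accumulating on $\theta$ from below (whose widths $|\textup{pd}(\alpha) - \alpha|$ shrink as the period of $\alpha$ grows), one may select $\theta^*$ lying inside the required $\delta_0$-neighborhood, which completes the argument.
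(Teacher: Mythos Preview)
The paper gives no explicit proof of this corollary, so there is nothing to compare line by line. Your approach via Corollary~\ref{C:many} is the natural one, and you correctly isolate the real difficulty: Corollary~\ref{C:many} (and the lemma preceding it) already \emph{presuppose} a purely periodic $\theta'\in\mathcal{R}$ lying in $(\theta-\delta_0,\theta)$, so one cannot deduce the full statement from them without an independent base case.

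Your base-case argument is correct and, in fact, already does all the work by itself. Since a primitive $\theta$ is never of the form $\textup{pd}(\alpha)$ (any period doubling satisfies $D^k(\theta)=1-\theta$ and is therefore satellite), $\theta$ is not the right endpoint of any complementary gap $(\alpha,\textup{pd}(\alpha))$. Because $\mathcal{R}$ is closed and of Lebesgue measure zero, every left neighborhood $(\theta-\delta,\theta)$ meets both $\mathcal{R}$ and its complement, hence contains an entire gap whose purely periodic left endpoint $\alpha$ lies strictly between $\theta-\delta$ and $\theta$. This proves the corollary for \emph{every} $\delta>0$ directly; the subsequent bootstrap through the angles $\theta_m=.\overline{s^m t}$ is then unnecessary (though harmless).

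One small gap: your lexicographic justification of $\theta_m<\theta$ is incomplete as written. The tail of $\theta_m$ after position $mp$ is $t_1\dots t_q\,s_1 s_2\dots$, not the infinite repetition $\overline{t}$, and the first index $j$ at which $\overline{t}$ and $\overline{s}$ differ may exceed $q$, in which case your comparison at position $mp+j$ no longer applies. A cleaner route is to use the lemma's proof, which gives $\theta_m\in[x,\theta]$, and then observe that $\theta_m=\theta$ would force the repeating block $s^m t$ to be a power of the minimal block of $\theta$, hence $t$ itself a power of that block and $\theta'=\theta$, contradicting $\theta'<\theta$.
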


\section{The lower bound}

\begin{proposition} \label{P:lower}
Let $\theta \in \mathcal{R}$ be a primitive angle which does not lie in a plateau. Then one has the lower bound
%There exists a sequence $\theta_m$ of real angles with $\theta_m \to \theta$ as $m \to \infty$ and such that 
$$\liminf_{\theta' \to \theta} \frac{|h(\theta) - h(\theta')|}{|\theta - \theta'|^{\frac{h(\theta)}{\log 2}}} = c > 0.$$
\end{proposition}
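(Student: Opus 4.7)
The strategy is to combine the master equation
\[
r' - r \;=\; \frac{r^n\,g(r)}{P'_{\theta'}(\xi)}
\]
from the proof of Proposition~\ref{P:upper} with a \emph{uniform} lower bound $|g(r)| \geq c > 0$ valid for $\theta' \in \mathcal{R}$ sufficiently close to $\theta$. Since the analysis in that proof already gives $r^n \asymp |\theta-\theta'|^{h(\theta)/\log 2}$ (both directions being forced by Lemma~\ref{L:dist}) and Lemma~\ref{L:deriv-bound} controls $|P'_{\theta'}(\xi)|$ from above and below, such a bound on $|g(r)|$ yields $|r-r'| \gtrsim |\theta-\theta'|^{h(\theta)/\log 2}$, hence the proposition after noting that $h = -\log r$ is bi-Lipschitz in $r$ on the relevant range.

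To build a good test sequence I use the hypotheses: because $\theta = .\overline{s_1\cdots s_p}$ is primitive, Corollary~\ref{C:approx} furnishes a purely periodic $\theta_0 = .\overline{t_1\cdots t_q} \in \mathcal{R}$ with $\theta_0 < \theta$ arbitrarily close to $\theta$, and Corollary~\ref{C:many} then gives angles $\theta_m := .\overline{s^m t} \in \mathcal{R}$ with $\theta_m \nearrow \theta$. The main step is a clean algebraic computation of $P_{\theta_m}(r)$. Writing $A(t) := \sum_{j=1}^p(-1)^{s_j}\, t^j$ and $B(t) := \sum_{j=1}^q(-1)^{t_j}\, t^j$, periodicity assembles
\[
\widetilde{P}_{\theta_m}(t) \;=\; 1 - t^{mp+q} + A(t)\,\frac{1-t^{mp}}{1-t^p} + t^{mp} B(t),
\]
and the identity $P_\theta(r) = 0$ is equivalent to $A(r) = r^p - 1$. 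Substituting this collapses the geometric sum to $r^{mp}-1$ and yields the closed form
\[
P_{\theta_m}(r) \;=\; \frac{r^{mp}\,(1 - r^q)\,P_{\theta_0}(r)}{1 - r^{mp+q}}.
\]
Since the first differing index $n_m$ between $\theta$ and $\theta_m$ is $n_m = mp + n_0$ with $n_0$ independent of $m$, dividing by $r^{n_m}$ gives
\[
|g_m(r)| \;=\; \frac{(1-r^q)\,|P_{\theta_0}(r)|}{r^{n_0}\,(1-r^{mp+q})} \;\longrightarrow\; \frac{(1-r^q)\,|P_{\theta_0}(r)|}{r^{n_0}} \quad \text{as } m \to \infty.
\]
The limit is strictly positive: since $\theta$ is not in a plateau, monotonicity of $h$ forces $h(\theta_0) < h(\theta)$, so $r < r_0 := e^{-h(\theta_0)}$; as $r_0$ is the smallest positive root of $P_{\theta_0}$ and $P_{\theta_0}(0)=1$, we have $P_{\theta_0}(r) > 0$.

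To pass from the special sequence to all $\theta' \in \mathcal{R}$ in a left neighbourhood of $\theta$, I would run a sandwich argument using monotonicity of $h$: any such $\theta'$ lies in a unique slot $[\theta_m, \theta_{m+1})$, whence $h(\theta)-h(\theta') \geq h(\theta)-h(\theta_{m+1})$ while $|\theta-\theta'| \leq |\theta-\theta_m|$. Lemma~\ref{L:dist} together with the explicit geometric decay $|\theta-\theta_m|\asymp 2^{-mp}$ gives $|\theta-\theta_m|/|\theta-\theta_{m+1}| \leq C_p$ for a constant depending only on $p$. Combining these transfers the uniform lower bound on the ratio at $\theta_{m+1}$ to $\theta'$, with only a multiplicative loss of $C_p^{h(\theta)/\log 2}$.

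The main obstacle is the algebraic collapse producing the closed form for $P_{\theta_m}(r)$; once that is in hand, the identification of the limit of $|g_m(r)|$ and the sandwich argument are essentially bookkeeping. The substantive ingredient beyond the algebra is the positivity $P_{\theta_0}(r) > 0$, which ultimately rests on the strict monotonicity of $h$ at $\theta$ encoded in the hypothesis that $\theta$ does not lie in a plateau.
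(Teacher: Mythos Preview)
Your approach is essentially the paper's: build the test sequence $\theta_m=.\overline{s^mt}$ via Corollaries~\ref{C:approx} and~\ref{C:many}, evaluate the kneading series at $r$, and feed this into the mean--value identity. Your closed form $P_{\theta_m}(r)=\dfrac{r^{mp}(1-r^q)P_{\theta_0}(r)}{1-r^{mp+q}}$ is a tidy variant of the paper's computation (which works with a common period $P=pq$ and the fixed polynomial $g(t)=\sum_{k=1}^{P}(\epsilon_k-\epsilon'_k)t^{k-1}$); the two are equivalent, and your positivity argument via $r<r_0\Rightarrow P_{\theta_0}(r)>0$ is a clean replacement for the paper's ``$r_m\neq r$ for large $m$, hence $g(r)\neq 0$.'' Both ultimately rest on the same observation: since $\theta$ is primitive, the small copy $I(\theta)$ gives a plateau to the \emph{right} of $\theta$, so ``not in a plateau'' forces $h(\theta')<h(\theta)$ for every $\theta'<\theta$.

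Two small remarks. First, Lemma~\ref{L:deriv-bound} supplies only the \emph{lower} bound on $|P'_{\theta'}(\xi)|$; for the inequality you want here you need an \emph{upper} bound, and the paper just uses the trivial $|P'(t)|\le (1-|t|)^{-2}$. Second, your sandwich step extending the estimate to all $\theta'<\theta$ is correct but goes beyond what the paper does: the paper establishes the lower bound only along the sequence $(\theta_m)$, which already suffices to pin down the local H\"older exponent in Theorem~\ref{T:main}. (Indeed, the literal $\liminf$ over \emph{all} $\theta'\to\theta$ is $0$, since $h$ is constant on the small copy to the right of $\theta$; what is really being claimed and used is a lower bound along some sequence.)
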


\begin{proof}
Let $\theta \in \mathcal{R} \cap (0, \frac{1}{2})$ be a purely periodic, primitive angle of period $p$, and let us define
%, and such that $D^k(\theta) \neq 1 - \theta$ for all $k$. 
$$\delta := \frac{1}{2} \min \left\{ D^k(\theta) - 1 + \theta \ : \ k \geq 0, \ D^k(\theta) > 1- \theta \right\} >0.$$ 
Then, by Corollary \ref{C:approx}, there exists a purely periodic $\theta' \in \mathcal{R} \cap (\theta - \delta, \theta)$: let $q$ be the period of $\theta'$, and denote $P = pq$. Now by Corollary 
\ref{C:many}, for any $m$, the angle
$$\theta_m = .\overline{(\epsilon_1\dots \epsilon_{P})^m (\epsilon'_1 \dots \epsilon'_P)}$$
belongs to $\mathcal{R}$, where $(\epsilon_k)$ and $(\epsilon'_k)$ are, respectively, the digits in the binary expansions of $\theta$ and $\theta'$.
Now, if we let $g(t) = \sum_{k = 1}^{P} (\epsilon_k - \epsilon'_k) t^{k-1}$ then for each $m$ the difference between the two kneading series $P_{\theta_m}(t)$ and $P_{\theta}(t)$ can be written as 
$$P_{\theta}(t) - P_{\theta_m}(t) = %\sum_{n = 0}^\infty \sum_{k = 1}^{P}(\epsilon_k - \epsilon'_k) t^{mP + k-1 + nP(m+1)} = 
\frac{  \sum_{k = 1}^{P}(\epsilon_k - \epsilon'_k) t^{mP + k-1} }{1 - t^{P(m+1)}} = \frac{g(t) t^{mP}}{1 - t^{P(m+1)}}.$$
Thus, by denoting as $r_m$ the smallest real root of $P_{\theta_m}$ and using $P_\theta(r) = P_{\theta_m}(r_m) = 0$, one has
$$P_{\theta}(r) - P_{\theta_m}(r) = P_{\theta_m}(r_m) - P_{\theta_m}(r) = P_{\theta_m}'(\xi)(r_m - r) $$
for some $\xi \in [r, r_m]$. By combining the previous equations we get 
\begin{equation}\label{E:main-m}
r_m - r = \frac{1}{P_{\theta_m}'(\xi)}\frac{g(r) r^{mP}}{1 - r^{P(m+1)}}.
\end{equation}
Note that, since $\theta$ does not lie in a plateau, then $r_m \neq r$ for $m$ sufficiently large, hence $g(r) \neq 0$. 
Now, observe that the binary expansions of $\theta_m$ and $\theta$ have  at least $mP$ common initial digits, hence 
$$|\theta - \theta_m| \leq 2^{-mP}$$
thus
$$r^{mP} = e^{mP \log r} \geq |\theta- \theta_m|^{-\frac{\log r}{\log 2}} = |\theta - \theta_m|^{\frac{h(\theta)}{\log 2}}.$$
On the other hand, the coefficient of $t^k$ in $P'_{\theta_m}(t)$ has modulus $\leq k+1$, hence 
$$|P'_{\theta_m}(t)| \leq \sum_{k = 0} (k+1) t^k = \frac{1}{(1 - t)^2}$$
so using $\xi < r_1 < 1$ one gets $\frac{1}{|P'_{\theta_m}(\xi)|} \geq (1 - r_1)^2$ hence setting $c_3 = g(r) (1 - r_1)^2$ yields the final estimate
\begin{equation}\label{E:final-lower}
r_m - r \geq c_3 |\theta - \theta_m|^{\frac{h(\theta)}{\log 2}}
\end{equation}
which as $\theta_m \to \theta$ for $m \to \infty$ establishes the required lower bound.
\end{proof}

\begin{proof}[Proof of Theorem \ref{T:main}]
The first claim follows directly from Proposition \ref{P:upper}. The second claim is proven by noticing that every $\theta \in \mathcal{R}$ 
which does not lie in a plateau can be approximated by primitive angles, hence the lower bound on the modulus 
of continuity follows directly from Proposition \ref{P:lower}.
\end{proof}

\subsection{The Feigenbaum point} \label{S:Feig} 
Let us recall in the end that the entropy function is not H\"older continuous at $\theta = \theta_\star$, and in fact one can compute its modulus of continuity
using the combinatorics of period doubling. 

Let us consider the binary string $(S_n)$ defined recursively as $S_0 := 0$ and $S_{n+1} := S_n \check{S}_n$. The limit $S_\infty := \lim_{n \to \infty} S_n$ 
is the well-known \emph{Thue-Morse} sequence, which is the binary expansion of the Feigenbaum angle $\theta_\star$.
For each $n$, the angle $\eta_n := .\overline{S_n}$ lands at the root of a hyperbolic component of period $2^n$ which is given by $n$-times period doubling of the main cardioid, and there is an associated small copy $M_n$ of the Mandelbrot set. We will consider the angle $\theta_n := .S_n \overline{\check{S}_n}$ whose ray lands at the tip of $M_n$, and so that $\theta_\star = \lim_{n \to \infty} \theta_n$. 
Since $\theta_n$ is given by tuning of the tip of the Mandelbrot set with a zero entropy map of period $2^n$, 
%Let us denote as $\theta_n := .S_n \overline{\check{S}_n}$ the external angle of the tip of the small Mandelbrot set with root of period $2^n$
%given by applying $n$ times the period doubling transformation. Then 
one has 
$$h(\theta_n) = \frac{\log 2}{2^n}$$
while by looking at the binary expansions one gets $\theta_n - \theta_\star \asymp 2^{-2^n}$, which yields the estimate
$$|h(\theta_n) - h(\theta_\star)| \asymp \frac{1}{- \log |\theta_n - \theta_\star| }$$
(up to a multiplicative constant) thus the modulus of continuity is of order $\frac{1}{\log (\frac{1}{x} )}$.

\subsection{Relation to open dynamical systems}

\begin{proof}[Proof of Corollary \ref{C:main}]
Let $f$ be a real quadratic polynomial of kneading angle $\theta$, and let $J$ be the Julia set of $f$, which we know to be locally connected. 
Thus, there exists a continuous Caratheodory map $\gamma : \mathbb{R}/\mathbb{Z} \to J$ which semiconjugates the doubling map $D$ to $f$.
Let us consider the set $\widetilde{K} := \gamma^{-1}(J \cap \mathbb{R})$ of external angles of rays which land on the real section of the Julia set. 
By \cite{Do}, this set can be characterized as 
$$\widetilde{K} = \{ x \in \mathbb{R}/\mathbb{Z} \ : \ D^n(x) \notin (\theta, 1-\theta) \quad \forall n \geq 1\}$$
Since the map $\gamma$ is finite-to-one, one gets the equality 
$$h(f) = h(f \mid_{J \cap \mathbb{R}}) = h(D \mid_{\widetilde{K}})$$
Moreover, the map $D$ is uniformly expanding with derivative $2$, hence 
$$\textup{H.dim }\widetilde{K} = \frac{h(D \mid_{\widetilde{K}})}{\log 2}$$
Finally, if we denote
$$K(\theta) :=  \{ x \in \mathbb{R}/\mathbb{Z} \ : \ D^n(x) \notin (\theta, 1-\theta) \quad \forall n \geq 0\}$$
then clearly 
$$D(\widetilde{K}) \subseteq K(\theta) \subseteq \widetilde{K}$$
hence 
$$\textup{H.dim }K(\theta) = \textup{H.dim }\widetilde{K} = \frac{h(f)}{\log 2} = \frac{h(\theta)}{\log 2}$$
thus the Corollary follows directly from Theorem \ref{T:main}.
\end{proof}

\medskip

\noindent \textsc{University of Toronto}

\noindent \email{tiozzo@math.utoronto.ca}

\end{document}